\theoremstyle{plain}
\newtheorem{thm}{Theorem}[section]
\newtheorem{dfn}[thm]{Definition}
\newtheorem{lem}[thm]{Lemma}
\newtheorem{cor}[thm]{Corollary}
\newtheorem{rmk}[thm]{Remark}
\DeclareMathOperator{\C}{c}
\begin{document}

\title[ Non-inner automorphisms ]%
{ Non-inner automorphisms of order $p$ \\ in finite  $p$-groups of coclass 3 }

\author[M.Ruscitti]{Marco Ruscitti}
\address{DISIM \\Universit\`a degli studi dell'Aquila\\ 67100 L'Aquila, Italy\\
{\it E-mail address}: { \tt marco.ruscitti@dm.univaq.it }}

\author[L.Legarreta]{Leire Legarreta}
\address{Matematika Saila\\ Euskal Herriko Unibertsitatea UPV/EHU\\
48080 Bilbao, Spain\\ {\it E-mail address}: {\tt leire.legarreta@ehu.eus}}

\author{manoj K. Yadav}
\address{Harish-Chandra Research Institute, Chhatnag Road, Jhunsi
Allahabad, 211 019, India  \\
{ \it E-mail address}: {\tt  myadav@hri.res.in}}

\thanks{The first author would like to thank the Department of Mathematics at the University of the Basque Country for its excellent hospitality while part of this paper was being written. The second author is supported by the Spanish Government, grants MTM2011-28229-C02-02 and MTM2014-53810-C2-2-P, and by the Basque Government, grant IT753-13. The third author thanks INDAM and the Department of Mathematics at the University of L'Aquila for its excellent hospitality for a month during June - July, 2015.
}

\keywords{Finite p-groups, non-inner automorphisms, derivations, coclass\vspace{3pt}}
\subjclass[2010]{20D45, 20D15}

\begin{abstract}
In this paper we study the existence of at least one non-inner automorphism of order $p$ of a non-abelian finite $p$-group of coclass $3$, whenever the prime $p\neq 3$.  
\end{abstract}

\maketitle

\section{Introduction}

The motive of this paper is to contribute to the following longstanding conjecture of Berkovich \cite[Problem 4.13]{khukhro:2010}, posed in 1973:

\vspace{.1in}

\noindent{\bf Conjecture.}  Every finite $p$-group admits a non-inner automorphism of order $p$, where $p$ denotes a prime number. 

\vspace{.1in}

This conjecture, which will be called {\it the conjecture} throughout this paper, can be viewed as a refinement of the following celebrated theorem of Gasch\"{u}tz \cite{gaschutz:1966}: Every non-abelian finite $p$-group admits a non-inner automorphism of order some power of $p$. 

The conjecture  has attracted the attention of many mathematicians during the last couple of decades, and has been confirmed for many interesting classes of finite $p$-groups.  It is interesting to put on record that, in  1965, Liebeck \cite{liebeck:1965} proved the existence of a non-inner automorphism of order $p$ in all finite $p$-groups of class $2$, where $p$ is an odd prime. For $p=2$, he proved the existence of a non-inner automorphism of order $2$ or $4$. The fact that there always exists a non-inner automorphism of order $2$ in all finite $2$-groups of class $2$ was proved by Abdollahi \cite{abdollahi:2007} in 2007. The conjecture was confirmed for finite regular $p$-groups by Schmid \cite{schmid:1980} in 1980. Deaconescu \cite{deaconescu:2002} proved it for all finite $p$-groups $G$ which are not strongly Frattinian, in other words, groups satisfying $C_G(Z(\Phi(G))) \ne \Phi(G)$. Abdollahi \cite{abdollahi:2010} proved it for finite $p$-groups $G$ such that $G/Z(G)$
  is a powerful $p$-group, and Jamali and Viseh \cite{jamali:2013} proved the conjecture for finite $p$-groups with cyclic commutator subgroup. In the realm of finite groups, quite recently, the conjecture has been confirmed for $p$-groups of nilpotency class $3$, by Abdollahi, Ghoraishi and Wilkens  \cite{abdollahi':2013},  and  for  $p$-groups of coclass $2$ by Abdollahi et al \cite{abdollahi:2014}.  Finally,  for semi-abelian $p$-groups, the conjecture has been confirmed by Benmoussa and  Guerboussa \cite{benmoussa:2015}.

In this paper we add an important class of $p$-groups to the above list by proving the following result.

\vspace{.1in}

\noindent {\bf Main Theorem.} {\it The  conjecture holds true for all non-abelian finite $p$-groups of coclass $3$, where $p$ is a prime integer such that  $p\neq 3$.}

\vspace{.1in}

Since the conjecture  holds true for all finite $p$-groups $G$ having nilpotency class at most $3$ (\cite{liebeck:1965, abdollahi:2007, abdollahi':2013}), from now on,  we can assume that the nilpotency class of the  groups under consideration is greater than $3$. 

The organization of the paper is as follows. In Section 2 we put on record some preliminary results  which will  be used in the subsequent sections and state some structure theorems. Certain derivations are introduced, which will be used for constructing automorphisms of $p$-groups. Section 3, mainly, deals with useful tools and processes for constructing automorphisms of order $p$ of $p$-groups. Finally, the proof of Main Theorem is presented in the last section.

We conclude this  section by setting some notations which are mostly standard. For a given finite $p$-group $G$ and an integer $k \ge 1$, we denote by $\gamma_k(G)$ and $Z_k(G)$, respectively, the $k$-th term of the lower and the upper central series of $G$. The nilpotency class of a nilpotent group $G$ is denoted by $c(G)$. Recall that a finite group $G$ of order $p^n$ is said to be of \emph{coclass} $cc(G)$ if $c(G) = n - cc(G)$. The number of elements in any minimal generating set of a finite $p$-group $G$ is denoted by $d(G)$. Indeed, for a subgroup $H$ of a group $G$, $C_G(H)$ denotes the centralizer of $H$ in $G$, and analogously, for an element $x$ in a group $G$, $C_G(x)$ denotes the centralizer of $x$ in $G$. Finally, $\Phi(G)$ denotes the Frattini subgroup of the group $G$.

\section{Preliminaries and well-known results}
In this section, we point out some preliminaries and recall some well-known results that will be useful in the subsequent sections. We start with the following  two results due to Abdollahi  \cite{abdollahi:2010}, which allow us to understand  the connection between the existence of non-inner automorphisms of order $p$ and the structure of the first two terms of the upper central series in a finite $p$-group. 

\begin{lem} \label{Neq}
Let $G$ be a finite $p$-group such that $G$ has no non-inner automorphisms of order $p$ leaving $\Phi(G)$ elementwise fixed. Then, 
$$\Omega_{1}(Z(Inn(G))) \cong \Omega_{1}(Z_{2}(G)/Z(G)) \cong \underbrace{\Omega_{1}(Z(G)) \times \cdots \times \Omega_{1}(Z(G))}_{d(G) \ times}.$$
\end{lem}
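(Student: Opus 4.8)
The plan is to exploit the hypothesis — that $G$ has no non-inner automorphism of order $p$ fixing $\Phi(G)$ pointwise — by constructing, for every ``candidate'' homomorphism that would give rise to such an automorphism, an honest automorphism and then forcing it to be inner. The natural source of such candidates is $\operatorname{Hom}(G/\Phi(G), \Omega_1(Z(G)))$: given any homomorphism $f\colon G/\Phi(G)\to \Omega_1(Z(G))$, the map $x\mapsto x\,f(x\Phi(G))$ is an endomorphism of $G$ that is trivial on $\Phi(G)$, and since $f$ lands in the center with image of exponent $p$, it is an automorphism of order dividing $p$. By hypothesis every such automorphism must be inner, hence realized by conjugation by some element of $G$; tracking which element, one sees it must lie in $Z_2(G)$ and, modulo $Z(G)$, have order dividing $p$. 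This sets up a map between $\operatorname{Hom}(G/\Phi(G),\Omega_1(Z(G)))$ and $\Omega_1(Z_2(G)/Z(G))$.

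First I would make precise the correspondence $f\leftrightarrow \bar g$ just sketched and check it is a well-defined group homomorphism in both directions: an element $g\in G$ with $[g,\Phi(G)]=1$ and $g^p\in Z(G)$ induces conjugation, which is trivial on $\Phi(G)$ and has order dividing $p$; conversely conjugation by such a $g$ differs from the identity by the ``derivation'' $x\mapsto [x,g]$, which factors through $G/\Phi(G)$ and takes values in $\Omega_1(Z(G))$ precisely when $g\in Z_2(G)$ and $\bar g$ has order $p$. The hypothesis is exactly what makes the first map surjective onto the relevant set; injectivity is immediate since distinct $f$ move some generator differently. This should yield
$$\Omega_1\!\left(Z_2(G)/Z(G)\right)\;\cong\;\operatorname{Hom}\!\left(G/\Phi(G),\,\Omega_1(Z(G))\right)\;\cong\;\underbrace{\Omega_1(Z(G))\times\cdots\times\Omega_1(Z(G))}_{d(G)\text{ times}},$$
the last isomorphism because $G/\Phi(G)$ is elementary abelian of rank $d(G)$ and $\Omega_1(Z(G))$ is elementary abelian, so the Hom-group is a direct sum of $d(G)$ copies of it.

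For the remaining isomorphism $\Omega_1(Z(\operatorname{Inn}(G)))\cong\Omega_1(Z_2(G)/Z(G))$, I would use the standard identification $\operatorname{Inn}(G)\cong G/Z(G)$, under which $Z(\operatorname{Inn}(G))\cong Z_2(G)/Z(G)$; taking $\Omega_1$ of both sides is then formal, though one must note that $Z_2(G)/Z(G)$ need not be elementary abelian, so this is genuinely $\Omega_1$ and not the whole group. Chaining the three isomorphisms gives the statement.

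The main obstacle is the surjectivity step: showing that every automorphism of the form $x\mapsto x f(x\Phi(G))$, once known to be inner, is conjugation by an element that genuinely lies in $Z_2(G)$ with the correct order modulo $Z(G)$, and conversely that \emph{every} class in $\Omega_1(Z_2(G)/Z(G))$ arises this way. The delicate point is disentangling the case where the inducing element itself lies in $Z(G)$ (giving the trivial automorphism) from the general case, and verifying that the assignment is additive rather than merely a bijection of sets. One should also be careful that the hypothesis is used in full strength — it rules out non-inner automorphisms of order exactly $p$ fixing $\Phi(G)$, which is what guarantees the constructed endomorphisms, being automorphisms of $p$-power order at most $p$, fall inside $\operatorname{Inn}(G)$.
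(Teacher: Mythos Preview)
The paper does not prove this lemma; it is quoted as a known result of Abdollahi \cite{abdollahi:2010}, so there is no in-paper argument to compare against. Your plan is the standard approach and is correct in structure.

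One point deserves to be added to your list of verifications. The claim that $\phi_f$ has order dividing $p$ rests on the identity $\phi_f^{\,k}(x)=x\,f(\bar x)^k$, and this inductive step needs $f(\bar x)\in\Phi(G)$, i.e.\ $\Omega_1(Z(G))\le\Phi(G)$. This is not free: for $G=C_p$ it fails and the conclusion of the lemma is actually false, so a tacit non-abelian assumption is in play. For non-abelian $G$ the hypothesis of the lemma does force $\Omega_1(Z(G))\le\Phi(G)$: if some $z\in\Omega_1(Z(G))$ lay outside $\Phi(G)$ then $\langle z\rangle$ would split off as a direct factor, $G=\langle z\rangle\times M$ with $M\neq 1$, and for any $1\neq c\in\Omega_1(Z(M))$ the map fixing $M$ pointwise and sending $z\mapsto zc$ is a non-inner automorphism of order $p$ fixing $\Phi(G)=\Phi(M)$, contrary to hypothesis. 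Once this is in hand your order computation is valid.

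Regarding the obstacle you flag --- that every class $\bar g\in\Omega_1(Z_2(G)/Z(G))$ yields an element of $\operatorname{Hom}(G/\Phi(G),\Omega_1(Z(G)))$ --- this does go through and needs no hypothesis: for such $g$ one has $[x,g]\in Z(G)$ with $[x,g]^p=[x,g^p]=1$, so $x\mapsto[x,g]$ lands in $\Omega_1(Z(G))$; and $[g,\gamma_2(G)]=1$ by the three-subgroups lemma while $[g,x^p]=[g,x]^p=1$, so the map kills $\Phi(G)=G^p\gamma_2(G)$. This produces the injective homomorphism $\Omega_1(Z_2(G)/Z(G))\hookrightarrow\operatorname{Hom}(G/\Phi(G),\Omega_1(Z(G)))$; the hypothesis is then used exactly where you say, to force surjectivity. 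The remaining identifications $\operatorname{Hom}(G/\Phi(G),\Omega_1(Z(G)))\cong\Omega_1(Z(G))^{d(G)}$ and $\Omega_1(Z(\operatorname{Inn}(G)))\cong\Omega_1(Z_2(G)/Z(G))$ are routine, as you indicate.
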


\begin{cor}\label{d2}
Let $G$ be a finite $p$-group such that $G$ has no non-inner automorphisms of order $p$ leaving $\Phi(G)$ elementwise fixed. Then, 
$$d(Z_{2}(G)/Z(G))=d(G)d(Z(G)).$$
\end{cor}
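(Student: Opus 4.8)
The plan is to derive Corollary \ref{d2} directly from Lemma \ref{Neq} by extracting the ranks of the elementary abelian sections involved. First I would recall that for a finite $p$-group $A$, the rank $d(\Omega_1(A))$ of its largest elementary abelian section coincides with $d(A/\Phi(A))$ only when $A$ is abelian; but in the situation of Lemma \ref{Neq} the groups $Z(\Inn(G))$, $Z_2(G)/Z(G)$ and $Z(G)$ are all abelian, so for each of them the number of generators equals the rank of the $\Omega_1$-subgroup. Concretely, for an abelian $p$-group $A$ one has $\Omega_1(A) \cong (\Z/p\Z)^{d(A)}$, hence $d(\Omega_1(A)) = d(A)$.

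Next I would apply this observation to the isomorphisms furnished by Lemma \ref{Neq}. Taking $d$ of the isomorphism
\[
\Omega_1(Z_2(G)/Z(G)) \cong \underbrace{\Omega_1(Z(G)) \times \cdots \times \Omega_1(Z(G))}_{d(G)\text{ times}}
\]
and using that $d$ is additive over direct products of elementary abelian groups, the right-hand side has rank $d(G)\cdot d(\Omega_1(Z(G))) = d(G)\, d(Z(G))$, using abelianness of $Z(G)$. The left-hand side has rank $d(\Omega_1(Z_2(G)/Z(G))) = d(Z_2(G)/Z(G))$, since $Z_2(G)/Z(G)$ is abelian. Combining these two computations yields $d(Z_2(G)/Z(G)) = d(G)\, d(Z(G))$, which is exactly the claimed identity.

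I do not anticipate a genuine obstacle here: the corollary is a direct bookkeeping consequence of the lemma. The only point requiring a moment of care is the justification that $d(\Omega_1(A)) = d(A)$ for an abelian $p$-group $A$ and that all three groups appearing are indeed abelian --- $Z(G)$ and $Z(\Inn(G)) \cong Z(G/Z(G))$ trivially, and $Z_2(G)/Z(G)$ because it embeds into (in fact equals, by definition of the upper central series) the center of $G/Z(G)$. Once these facts are in place, taking ranks termwise in the displayed isomorphism of Lemma \ref{Neq} completes the proof.
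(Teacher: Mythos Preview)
Your argument is correct and is precisely the intended derivation: the paper states this result as an immediate corollary of Lemma \ref{Neq} (both attributed to Abdollahi \cite{abdollahi:2010}) without supplying a proof, and reading off the ranks of the elementary abelian groups in that isomorphism, using $d(\Omega_1(A))=d(A)$ for abelian $p$-groups $A$, is exactly how one passes from the lemma to the corollary.
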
 

Next we point out some facts about derivations in the additive setting. The reader is referred to  \cite{gavioli:1999} for more details and explicit proofs.

\begin{dfn}
Let $G$ be a group and  $M$ be a right $G$-module. A derivation $\delta:G \rightarrow M$ is a function such that $$\delta(gh)=\delta(g)h + \delta(h)  \text    { for all } g,h \in G.$$
\end{dfn}

As a remark, in the multiplicative setting, the derivation $\delta$ is defined by the rule $\delta(gh)={\delta(g)}^h\delta(h)$   for all  $g, h \in G$.

In terms of its properties, let us note that a derivation is uniquely determined by its values over a set of generators of $G$. Let $F$ be a free group generated by a finite subset $X$ and let $G=\langle X : r_{1}, \ldots, r_{n} \rangle$ be a group whose free presentation is $F/R$, where $R$ is the normal closure of the set of relations $\{ r_{1} , \ldots , r_{n}\}$ of $G$. Then a standard argument shows that $M$ is a $G$-module if and only if $M$ is an $F$-module on which $R$ acts trivially. Indeed, if we denote by $\pi$ the canonical homomorphism $\pi: F \rightarrow G$, then the action of $F$ on $M$ is given by $mf=m\pi(f)$, for all $m\in M$ and all $f\in F$. On the other hand, as it has already been said, every derivation $\delta :F \rightarrow M$ is uniquely determined by the assignments on the generators of $F$, and taking into account \cite{gruenberg:1970}, we can note that free groups are suitable places for constructing derivations.

Continuing with the same notation, we have the following results.

\begin{lem} \label{determineunique}
Let $M$ be an $F$-module. Then every function $f: X \rightarrow M$ extends in a unique way to a derivation $\delta: F \rightarrow M$.
\end{lem}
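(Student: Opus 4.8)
The plan is to deduce both uniqueness and existence from the universal property of the free group $F$, by passing to the split extension of $M$ by $F$ so that derivations $F \to M$ correspond to sections of the associated projection $E \to F$.

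For uniqueness, I would first record the elementary identities forced on any derivation $\delta \colon F \to M$: applying the defining rule to $1 = 1 \cdot 1$ gives $\delta(1) = \delta(1) + \delta(1)$, hence $\delta(1) = 0$, and then $0 = \delta(1) = \delta(g g^{-1}) = \delta(g) g^{-1} + \delta(g^{-1})$ gives $\delta(g^{-1}) = -\delta(g) g^{-1}$. Together with $\delta(gh) = \delta(g) h + \delta(h)$, these determine $\delta$ on every word in the generators $X^{\pm 1}$ in terms of the values $\delta(x)$, $x \in X$. Since $X$ generates $F$, any two derivations agreeing on $X$ must coincide.

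For existence, form the semidirect product $E = M \rtimes F$: its underlying set is $M \times F$, with multiplication $(m_1, g_1)(m_2, g_2) = (m_1 g_2 + m_2,\, g_1 g_2)$ coming from the right action of $F$ on $M$, and let $\pi \colon E \to F$ be the second-coordinate projection. The point is that a map $g \mapsto (\delta(g), g)$ from $F$ to $E$ is a homomorphism exactly when $\delta$ obeys the derivation identity. Given $f \colon X \to M$, the freeness of $F$ on $X$ yields a unique homomorphism $\sigma \colon F \to E$ with $\sigma(x) = (f(x), x)$ for all $x \in X$. The composite $\pi \circ \sigma$ agrees with $\mathrm{id}_F$ on the generating set $X$, hence $\pi \circ \sigma = \mathrm{id}_F$, so we may write $\sigma(g) = (\delta(g), g)$ for a well-defined function $\delta \colon F \to M$. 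Since $\sigma$ is a homomorphism, $\delta$ is a derivation, and $\delta(x) = f(x)$ by construction; this is the desired extension.

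I expect no serious obstacle here: the one delicate point, namely that the $M$-component of $\sigma(g)$ depends only on the group element $g$ and not on a word representing it, is handled automatically by building $\delta$ out of a homomorphism \emph{from} the free group, rather than by defining $\delta$ directly on reduced words and verifying invariance under free reduction (which would be the alternative, more combinatorial route). The whole argument is formal once the split extension $M \rtimes F$ is in place.
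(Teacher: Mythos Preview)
Your argument is correct and is precisely the standard one: uniqueness from the fact that the derivation identities propagate values on $X$ to all words in $X^{\pm 1}$, and existence via the section of the projection $M \rtimes F \to F$ supplied by the universal property of $F$. Note, however, that the paper does not actually give its own proof of this lemma; it is stated without proof, the reader being referred to \cite{gavioli:1999} and \cite{gruenberg:1970} for details. Your proof is the classical one found in those sources (in particular in Gruenberg), so there is nothing to compare here beyond observing that you have filled in exactly what the paper leaves to the references.
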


\begin{lem} \label{Der}
Let $M$ be a $G$-module and  $\delta: G \rightarrow M$ be a derivation. Then $\bar{\delta} : F \to M$ given by the composition $\overline{\delta}(f)=\delta(\pi(f))$ is a derivation  such that $\overline{\delta}(r_i)=0$ for all $i\in \{1,\ldots,n\}$. Conversely, if $\overline{\delta}:F \rightarrow M$ is a derivation such that $\overline{\delta}(r_i)=0$ for all $i \in \{1,\ldots, n\}$, then $\delta(fR)=\overline{\delta}(f)$ defines, uniquely, a derivation on $G=F/R$ to $M$ such that $\overline{\delta}=\delta \circ \pi$.
\end{lem}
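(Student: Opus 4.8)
The plan is to treat the two implications separately; the converse is the one that needs genuine care.

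For the first implication, starting from a derivation $\delta \colon G \to M$, I would simply set $\overline{\delta} = \delta \circ \pi$ and verify the derivation identity directly. Since $\pi$ is a homomorphism and the $F$-module structure on $M$ is the pull-back $mf = m\pi(f)$, for $f, g \in F$ we get $\overline{\delta}(fg) = \delta(\pi(f)\pi(g)) = \delta(\pi(f))\pi(g) + \delta(\pi(g)) = \overline{\delta}(f)g + \overline{\delta}(g)$, so $\overline{\delta}$ is a derivation on $F$. Every derivation kills the identity — from $\delta(1) = \delta(1\cdot 1) = \delta(1)\cdot 1 + \delta(1)$ — and each relator satisfies $\pi(r_i) = 1$ because $r_i \in R = \ker\pi$; hence $\overline{\delta}(r_i) = \delta(1) = 0$.

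For the converse, the key point — and the only place where real work is needed — is to upgrade the hypothesis ``$\overline{\delta}(r_i) = 0$ for all $i$'' to ``$\overline{\delta}$ vanishes on all of $R$'', exploiting that $R$ is the normal closure of $\{r_1,\dots,r_n\}$. First I would record the standard identity $\overline{\delta}(f^{-1}) = -\overline{\delta}(f)\pi(f)^{-1}$, obtained from $0 = \overline{\delta}(1) = \overline{\delta}(ff^{-1})$; in particular $\overline{\delta}(r_i^{-1}) = -\overline{\delta}(r_i)\pi(r_i)^{-1} = 0$. Then, for any $a \in F$ and any $i$, a two-step application of the derivation rule together with $\pi(r_i^{\pm 1}) = 1$ gives $\overline{\delta}(a r_i^{\pm 1} a^{-1}) = 0$: indeed $\overline{\delta}(a r_i^{\pm1}) = \overline{\delta}(a)\pi(r_i^{\pm1}) + \overline{\delta}(r_i^{\pm1}) = \overline{\delta}(a)$, and then $\overline{\delta}(a r_i^{\pm1} a^{-1}) = \overline{\delta}(a)\pi(a^{-1}) + \overline{\delta}(a^{-1}) = 0$. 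Since an arbitrary $w \in R$ is a product $g_1 \cdots g_k$ of such conjugates, the derivation rule expands $\overline{\delta}(w)$ as $\sum_{j} \overline{\delta}(g_j)\,\pi(g_{j+1}\cdots g_k)$, and every summand vanishes because $\overline{\delta}(g_j) = 0$; hence $\overline{\delta}(w) = 0$.

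With vanishing on $R$ in hand, the remainder is routine. For $f \in F$ and $r \in R$ we get $\overline{\delta}(fr) = \overline{\delta}(f)\pi(r) + \overline{\delta}(r) = \overline{\delta}(f)$, so $\overline{\delta}$ is constant on each coset of $R$ and $\delta(fR) := \overline{\delta}(f)$ is well defined on $G = F/R$. The derivation identity for $\delta$ is the computation of the first implication run backwards — $\delta(fR\cdot gR) = \overline{\delta}(fg) = \overline{\delta}(f)\pi(g) + \overline{\delta}(g) = \delta(fR)\cdot(gR) + \delta(gR)$ — and $\overline{\delta} = \delta\circ\pi$ holds by construction, since $\delta(\pi(f)) = \delta(fR) = \overline{\delta}(f)$. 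Uniqueness is immediate: any derivation $\delta'$ on $G$ with $\delta'\circ\pi = \overline{\delta}$ must satisfy $\delta'(g) = \overline{\delta}(f) = \delta(g)$ for any preimage $f$ of $g$, because $\pi$ is surjective.
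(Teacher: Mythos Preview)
Your proof is correct and complete. The paper does not actually supply a proof of this lemma: it is stated among the preliminaries with the remark that the reader is referred to \cite{gavioli:1999} for details and explicit proofs, so there is nothing to compare against beyond noting that your argument is the standard one.
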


In the following lemma we study a relationship between derivations and automorphisms of a finite $p$-group. In particular, if $M$ is a normal abelian subgroup of a finite $p$-group $G$,  it follows that $M$ has a $G$-module structure, and as well, an $F$-module structure, where $F/R$ is a free presentation of $G$.  

\begin{lem} \label{lift}
Let $G$ be a finite $p$-group and  $M$ be a normal abelian subgroup of $G$ viewed as  a $G$-module. Then for any derivation $\delta: G \rightarrow M$, we can define uniquely an endomorphism $\phi$ of $G$ such that $\phi(g)=g\delta (g)$ for all $g \in G$. Furthermore, if  $\delta(M)=1$,  then $\phi$ is  an automorphism of $G$.
\end{lem}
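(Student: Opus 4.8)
The plan is to verify that the map $\phi:G\to G$ defined by $\phi(g)=g\,\delta(g)$ is a homomorphism, using the derivation identity, then argue it is surjective (hence an automorphism) in the presence of the extra hypothesis $\delta(M)=1$. First I would check the homomorphism property directly: for $g,h\in G$,
\[
\phi(g)\phi(h)=g\,\delta(g)\,h\,\delta(h)=g h\,\big(h^{-1}\delta(g)h\big)\,\delta(h)=g h\,\big(\delta(g)^{h}\,\delta(h)\big)=g h\,\delta(gh)=\phi(gh),
\]
where the third equality uses that $M$ is abelian and normal (so conjugation by $h$ is the $G$-module action $\delta(g)^h=\delta(g)h$ written multiplicatively) and the fourth uses the (multiplicative) derivation rule $\delta(gh)=\delta(g)^h\delta(h)$ recalled just after the definition. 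Thus $\phi$ is an endomorphism, and it is clearly the unique endomorphism with $\phi(g)=g\delta(g)$ since that formula specifies it pointwise; so the uniqueness is immediate.

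Next I would show $\phi$ is an automorphism when $\delta(M)=1$. Since $G$ is finite, it suffices to prove $\phi$ is injective, or equivalently surjective. The key observation is that $\phi$ induces the identity on $G/M$: because $\delta$ takes values in $M$, we have $\phi(g)M=gM$ for every $g$, so $\phi$ descends to $\mathrm{id}_{G/M}$. In particular $\phi(G)M=G$. Moreover $\phi$ restricted to $M$ is the identity: for $m\in M$, $\phi(m)=m\,\delta(m)=m$ because $\delta(M)=1$. Hence $M=\phi(M)\subseteq\phi(G)$, and combined with $\phi(G)M=G$ this gives $\phi(G)=\phi(G)M=G$, so $\phi$ is surjective, and by finiteness bijective; thus $\phi\in\Aut(G)$.

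The only genuinely delicate point — though it is more a bookkeeping matter than a real obstacle — is keeping the translation between the additive module notation of the Definition ($\delta(gh)=\delta(g)h+\delta(h)$) and the multiplicative/conjugation notation used in the proof consistent: one must be careful that "$M$ viewed as a $G$-module" means the action is conjugation in $G$, i.e. $mg := g^{-1}mg$, so that the additive derivation identity becomes exactly the multiplicative one stated in the remark, and the step $g\delta(g)h = gh\,\delta(g)^h$ is valid. Once that dictionary is fixed, every step is a one-line verification, and there is no serious analytic or combinatorial difficulty. I would therefore expect the writeup to be short, with the homomorphism check being the "core" computation and the surjectivity argument via $\phi|_M=\mathrm{id}$ and $\phi(G)M=G$ being the clean finishing move.
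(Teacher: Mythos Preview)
Your proof is correct and is the standard verification; the paper itself does not supply a proof of this lemma, instead referring the reader to \cite{gavioli:1999} for details and explicit proofs. There is nothing to compare against, and your homomorphism check together with the surjectivity argument via $\phi|_M=\mathrm{id}$ and $\phi(G)M=G$ is exactly the expected route.
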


Now we  exhibit two results that  allows us to simplify some computations  in the subsequent sections. 

\begin{lem} \label{free}
Let $F$ be a free group,  $p$ be a prime number and  $A$ be an $F$-module. Let $\delta : F \rightarrow A$ be a derivation. Then, 
\begin{enumerate}
\item  $\delta(F^{p})=\delta(F)^{p}[\delta(F),\ _{p-1}F]$,
\item if $A \leq F$, $[A,\ _{i}F]=1$, we have $\delta(\gamma_{i}(F)) \leq [\delta(F),\ _{i-1}F]$ for all $ i \in \mathbb{N}$.
\end{enumerate}
\end{lem}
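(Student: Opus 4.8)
The plan is to prove both identities by working with the standard identities for derivations (the product rule $\delta(gh)=\delta(g)h+\delta(h)$) and translating well-known commutator and power identities into the additive module setting. Since $F$ is free, Lemma~\ref{determineunique} assures us that derivations are determined by arbitrary choices on generators, so there is no consistency obstruction; the content is purely the computational translation of group-theoretic identities for $F^p$ and $\gamma_i(F)$.

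For part (ii), I would argue by induction on $i$. First I would establish the base case and the inductive step by computing $\delta$ on a basic commutator $[x,y]$ with $x\in\gamma_{i-1}(F)$, $y\in F$. Using the derivation rule repeatedly on $[x,y]=x^{-1}y^{-1}xy$, together with $\delta(x^{-1})=-\delta(x)x^{-1}$, one gets a formula of the shape $\delta([x,y]) = \delta(x)(y-1)\cdot(\text{unit}) - \delta(y)(\text{something})(x-1) + \cdots$, i.e. modulo terms lying in $[\delta(F),\ _{i-1}F]$ the value is congruent to $\delta(x)(y-1)$ up to a unit in the group ring. The hypotheses $A\le F$ and $[A,\ _iF]=1$ are exactly what make the ``error'' terms vanish or land in $[\delta(F),\ _{i-1}F]$: since $\delta(F)\subseteq A$ and $A$ is centralized by $_iF$, the action of $F$ on $\delta(\gamma_{i-1}(F))\subseteq[\delta(F),\ _{i-2}F]$ can only push things one commutator-degree deeper, giving $\delta(\gamma_i(F))\subseteq[\delta(F),\ _{i-1}F]$. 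Since $\gamma_i(F)$ is generated by such commutators (and their conjugates, which only add more $F$-action and hence stay in the same subgroup), and $\delta$ of a product is a sum of $F$-translates of $\delta$ of the factors, the claim follows.

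For part (i), the key is the commutator-collection formula for $p$-th powers. Writing $(gh)^p = g^p h^p c$ where $c$ is a product of iterated commutators of $g,h$ of weight $\ge 2$ (with the weight-$2$ part being $[h,g]^{\binom p2}$ and higher terms involving $_{\ge 2}$-fold commutators), I would apply $\delta$ and use part (ii) together with $[A,\ _{p-1}F]$ absorbing all the higher collection terms. The point is that $\delta(g^p)$ unwinds to $\delta(g)(1+g+\cdots+g^{p-1})$, which is $\delta(g)\cdot p$ modulo $\delta(g)(g-1)(\cdots) \in [\delta(F),\ F]$, and iterating down the collection process one sees that the discrepancy between $\delta(F^p)$ and $\delta(F)^p$ is controlled entirely by commutator terms of weight up to $p-1$ in $F$ acting on $\delta(F)$, i.e. lies in $[\delta(F),\ _{p-1}F]$; conversely that subgroup is visibly contained in $\delta(F^p)$ by evaluating $\delta$ on suitable $p$-th powers. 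The main obstacle I anticipate is bookkeeping in the collection process: making precise which commutator weights appear with which $p$-divisible coefficients, and checking that the mod-$p$ vanishing of the relevant binomial coefficients $\binom pk$ for $1\le k\le p-1$ is exactly what lets every ``extra'' term be reorganized into $[\delta(F),\ _{p-1}F]$ rather than something shallower. Once the precise form of the Hall--Petrescu collection formula is pinned down, the application of $\delta$ and part (ii) is mechanical.
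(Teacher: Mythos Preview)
Your plan for part (ii) is essentially the paper's: induction on $i$, expanding $\delta([a,b])$ for $a\in F$, $b\in\gamma_k(F)$ (the paper writes the result as $\delta([a,b])=[\delta(a),b][a,\delta(b)][a,b,\delta(a)][a,b,\delta(b)]$), and using $[A,\ _{k+1}F]=1$ to place everything in $[\delta(F),\ _kF]$. So there is nothing to correct there.

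For part (i) you have the right kernel but you are wrapping it in machinery that is both unnecessary and not obviously licensed. The Hall--Petrescu collection of $(gh)^p$ is irrelevant: $F^{p}$ is generated by the individual $p$-th powers $x^p$, and $\delta$ of a product is an $F$-translate sum of $\delta$ of the factors, so it suffices to compute $\delta(x^p)$ for a single $x$. You already wrote down the key identity $\delta(x^p)=\delta(x)(1+x+\cdots+x^{p-1})$; the paper's entire argument is then the single group-ring congruence
\[
1+x+\cdots+x^{p-1}\ \equiv\ (x-1)^{p-1}\pmod{p},
\]
which immediately gives $\delta(x^p)\in\delta(F)^{p}[\delta(F),\ _{p-1}F]$ (multiplicatively, $\delta(x)^{(x-1)^{p-1}}=[\delta(x),\ _{p-1}x]$). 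No ``iteration down the collection process'' and no appeal to part (ii) is needed, and indeed you should \emph{not} invoke part (ii) here, since its extra hypothesis $A\le F$ is not assumed in part (i). Your anticipated ``bookkeeping obstacle'' with binomial coefficients disappears once you replace Hall--Petrescu by this one-line congruence: expanding $1+x+\cdots+x^{p-1}=\sum_{j=0}^{p-1}\binom{p}{j+1}(x-1)^{j}$ makes the $p$-divisibility of all intermediate terms and the survival of the top term $(x-1)^{p-1}$ transparent.
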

\begin{proof}
Let $ x \in F$. We have $\delta(x^{p})=\delta(x)^{x^{p-1} + x^{p-2} + \cdots + 1 }$. Since $(x-1)^{p-1} \equiv  x^{p-1} + x^{p-2} + \cdots + 1\mod p$, the first assertion follows. Now we show the second assertion by induction on $i$.
 Clearly, the assertion holds when $i=1$. 
For inductive hypothesis, we assume that if $[A,\ _{k}F]=1$,
then $\delta(\gamma_{k}(F)) \leq [\delta(F),\ _{k-1}F]$ for some $ k \in \mathbb{N}$. 
Take any $a \in F$ and any $b \in \gamma_{k}(F)$, and  suppose 
that $[A,\ _{k+1}F]=1$. Then, $$\delta([a,b])=[\delta(a),b][a,\delta(b)][a,b,\delta(a)][a,b,\delta(b)] \in 
  [\delta(F),\ _{k}F].$$
\end{proof}

\begin{lem} \label{powers}
Let $G$ be a finite $p$-group and  $A$ be a normal abelian subgroup of $G$. Let  $\delta$ be a derivation from $G$ to $A$ and  $\phi$ be an endomorphism of $G$ defined by the law $\phi(g)=g\delta(g)$, for all $g \in G$. Then the following formula holds,
 $$\phi^{i}(g)=\prod_{j=0}^{i} (\delta^{j}(g))^{\binom{i}{j}} \text{ for all } i \in \mathbb{N} \text{ and all } g\in G.$$
\end{lem}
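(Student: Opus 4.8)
The plan is to induct on $i$, using two facts: that $\phi$ is genuinely an endomorphism of $G$ (Lemma~\ref{lift}), not merely a self-map of the set $G$, and that every factor in the claimed product other than the leading term $\delta^0(g)=g$ lies in the abelian normal subgroup $A$, so that those factors may be permuted freely. Before the induction I would record two elementary points. Since $\delta(G)\le A\le G$, the iterates $\delta^j$ are well defined as maps $G\to A$ for $j\ge 1$, with the convention $\delta^0=\operatorname{id}_G$; and because $A\le G$, the defining rule $\phi(x)=x\,\delta(x)$ applies to $x\in A$ just as to a general element, so the single identity $\phi\bigl(\delta^j(g)\bigr)=\delta^j(g)\,\delta^{j+1}(g)$ holds for every $j\ge 0$.

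The base case $i=1$ is the definition of $\phi$. For the inductive step I would apply $\phi$ to the formula for $\phi^i(g)$ and use that $\phi$ is a homomorphism to pass it through the product and the exponents, obtaining
$$\phi^{i+1}(g)=\prod_{j=0}^{i}\phi\bigl(\delta^j(g)\bigr)^{\binom{i}{j}}=\prod_{j=0}^{i}\bigl(\delta^j(g)\,\delta^{j+1}(g)\bigr)^{\binom{i}{j}}.$$
Then I would split off the $j=0$ factor, which equals $g\,\delta(g)$, keep $g$ at the front, and regroup everything remaining — all of it inside the abelian group $A$ — by the total power of each fixed $\delta^k(g)$: the exponent of $\delta^0(g)=g$ stays $\binom{i}{0}=1=\binom{i+1}{0}$; for $1\le k\le i$ the exponent of $\delta^k(g)$ becomes $\binom{i}{k}+\binom{i}{k-1}$; and the exponent of $\delta^{i+1}(g)$ is $\binom{i}{i}=1=\binom{i+1}{i+1}$. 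Pascal's identity $\binom{i}{k}+\binom{i}{k-1}=\binom{i+1}{k}$ (with $\binom{i}{i+1}=0$ handling the top index) collapses this to $\prod_{k=0}^{i+1}\bigl(\delta^k(g)\bigr)^{\binom{i+1}{k}}$, which is the asserted formula at level $i+1$.

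The one point that really requires care — and the only genuine obstacle — is the legitimacy of reordering the factors, and this is precisely where normality and commutativity of $A$ are used: every factor indexed by $j\ge 1$, equivalently every power of $\delta^k(g)$ with $k\ge 1$, lies in the abelian group $A$, so these commute with one another, while the unique occurrence of $g$ can be held at the extreme left throughout. Once that is granted, the argument is just the binomial recursion together with the already-established fact (Lemma~\ref{lift}) that $\phi$ is multiplicative.
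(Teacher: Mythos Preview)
Your proof is correct and follows essentially the same inductive scheme as the paper: both argue by induction on $i$, both rely on the abelianness of $A$ to permute the $\delta^{j}(g)$ factors, and both finish by Pascal's identity. The only cosmetic difference is that the paper writes $\phi(P)=P\,\delta(P)$ for the whole product $P$ and then computes $\delta(P)$, whereas you distribute $\phi$ over the product using the homomorphism property from Lemma~\ref{lift}; these are equivalent manoeuvres and lead to the same regrouping.
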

\begin{proof}
We prove this formula by induction on the index $i$.  For $i=1$, there is nothing to prove. Now  suppose that the formula holds for $i=k-1$, where  $k \in \mathbb{N}$, and we prove it for $k$. Then, with the setting $\delta^0(g) = g$ for all $g \in G$, we compute
\begin{eqnarray*}
\phi^{k}(g)&=&\phi( \phi^{k-1}(g))= \phi\bigg(\prod_{j=0}^{k-1} (\delta^{j}(g))^{\binom{k-1}{j}}\bigg)\\ 
&=& \prod_{j=0}^{k-1} (\delta^{j}(g))^{\binom{k-1}{j}}  \delta \bigg(\prod_{j=0}^{k-1} (\delta^{j}(g))^{\binom{k-1}{j}}\bigg)\\
&=& \bigg( \prod_{j=0}^{k-1} (\delta^{j}(g))^{\binom{k-1}{j}} \bigg) \bigg(\prod_{j=0}^{k-1} (\delta^{j+1}(g))^{\binom{k-1}{j}} \bigg)\\
&=& \bigg( \prod_{j=0}^{k-1} (\delta^{j}(g))^{\binom{k}{j} - \binom{k-1}{j-1}} \bigg)  \bigg(\prod_{j=0}^{k-1} (\delta^{j+1}(g))^{\binom{k-1}{j}}\bigg)\\
&=&\bigg( \prod_{j=0}^{k-1} (\delta^{j}(g))^{\binom{k}{j}}  \bigg) \bigg( \prod_{j=1}^{k-1} (\delta^{j}(g)) ^{ - \binom{k-1}{j-1}} \bigg) \bigg( \prod_{j=0}^{k-1} (\delta^{j+1}(g))^{\binom{k-1}{j}} \bigg)\\
&=&\bigg( \prod_{j=0}^{k-1} (\delta^{j}(g))^{\binom{k}{j}}  \bigg)  \bigg( \prod_{j=1}^{k-1} (\delta^{j}(g)) ^{ - \binom{k-1}{j-1}} \bigg) \bigg(\prod_{j=1}^{k} (\delta^{j}(g))^{\binom{k-1}{j-1}}  \bigg)\\
&=& \bigg( \prod_{j=0}^{k-1} (\delta^{j}(g))^{\binom{k}{j}}  \bigg) \delta^{k}(g)=\prod_{j=0}^{k} (\delta^{j}(g))^{\binom{k}{j}}.
\end{eqnarray*}
So the formula holds true for all powers. 
\end{proof}

\begin{rmk} \label{naturalwaylift}
Let $G$ be a finite group, $N$ be a normal subgroup of  $G$. Let  $M$
be a normal abelian subgroup of $G$. If $M \leq Z(N)$, then any derivation $d : G/N \to
M$ extends to a derivation $\delta : G \to M$ defined by $\delta(g)=d(gN)$ for all $g \in G$. It is straightforward to show that $\delta$ is a derivation.
\end{rmk}

The following result appears as Theorem $3.2$ in \cite{caranti:1991} and will  be useful for constructing autormorphims.
\begin{thm}
\label{car}
Let $G=\langle x, y\rangle$ be a metabelian two-generator group. Then the following two conditions are equivalent:
\begin{enumerate}
\item 
For all $a,b\in \gamma_{2}(G)$ there is an automorphism of $G$ that maps $x$ to $xa$ and $y$ to $yb$.
\item
$G$ is nilpotent.
\end{enumerate}
\end{thm}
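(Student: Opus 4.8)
The plan is to recast condition~(i) as a statement about a single element of the group ring $\Lambda:=\mathbb{Z}[G/\gamma_2(G)]$ acting on the module $\gamma_2(G)$, and to recognise condition~(ii) as an invertibility statement for that element. Set $M:=\gamma_2(G)$. Since $G$ is metabelian, $M$ is abelian, so conjugation makes it a $\Lambda$-module (the $G$-action factoring through $G/M$), and since $G=\langle x,y\rangle$ the subgroup $M$ is the normal closure of $\xi:=[x,y]$, hence the \emph{cyclic} module $\Lambda\xi$. Write $\bar x,\bar y\in G/M$ for the images of $x,y$ and $I:=\Lambda(\bar x-1)+\Lambda(\bar y-1)$ for the augmentation ideal. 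From $[m,g]=m(\bar g-1)$ (additive module notation) one gets $\gamma_{k+2}(G)=MI^{k}$, so condition~(ii) says exactly that $I$ acts nilpotently on $M$, i.e.\ $MI^{n}=0$ for some $n$.

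Now fix $a,b\in M$, write $a=\xi\alpha$, $b=\xi\beta$ with $\alpha,\beta\in\Lambda$, and set $t:=\alpha(\bar y-1)-\beta(\bar x-1)\in I$. First I would produce the derivation $\delta\colon G\to M$ with $\delta(x)=a$, $\delta(y)=b$: it exists on the free group $F=\langle x,y\rangle$ by Lemma~\ref{determineunique}, and by Lemma~\ref{Der} it descends to $G=F/R$ once it annihilates $R$. Using $\bar\delta(w^{-1}rw)=\bar\delta(r)\bar w$ and $\bar\delta(r_1r_2)=\bar\delta(r_1)+\bar\delta(r_2)$ for $r,r_i\in R$, this reduces to checking $\bar\delta$ on normal generators of $R$, and since $\bar\delta(r)=\overline{\partial_x r}\cdot a+\overline{\partial_y r}\cdot b$ (Fox derivatives reduced into $\Lambda$) it comes down to the assertion $\overline{\partial_x r},\overline{\partial_y r}\in\operatorname{Ann}_\Lambda(M)$ for every relator $r$. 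This is the one point where metabelianness is genuinely used, and I expect it to be the main obstacle to a fully rigorous write-up; the clean way to see it is to pass to the free metabelian group on $x,y$, whose derived subgroup is the \emph{free} cyclic $\Lambda$-module on $[x,y]$ via the Magnus embedding, so that the Fox derivatives of a relator equal, up to a unit factor, the image of that relator in $\gamma_2(G)$, which is $0$. Granting $\delta$, a derivation-rule expansion (using that $\bar x,\bar y$ commute in $G/M$) gives $\delta(\xi)=a(\bar y-1)-b(\bar x-1)=\xi t$, and a short check gives $\delta(m^{g})=\delta(m)\bar g$ for $m\in M$, so $\delta|_{M}$ is $\Lambda$-linear and therefore, as $M=\Lambda\xi$, equals multiplication by $t$. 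By Lemma~\ref{lift}, $\phi(g):=g\,\delta(g)$ is an endomorphism of $G$; it induces the identity on $G/M$ and $\phi|_{M}$ is multiplication by $1+t$, so $\phi$ is an \emph{automorphism} with $\phi(x)=xa$, $\phi(y)=yb$ exactly when $1+t$ acts bijectively on $M$.

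This reduces both implications to module theory. For (ii)$\Rightarrow$(i): if $G$ is nilpotent then $I$ acts nilpotently on $M$, so $1+t$ is invertible on $M$ with inverse $\sum_{k\ge 0}(-t)^{k}$ (a finite sum), and the $\phi$ above is the required automorphism for the arbitrary pair $(a,b)$. For (i)$\Rightarrow$(ii): given $t\in I$, write $t=\alpha(\bar y-1)-\beta(\bar x-1)$ and apply~(i) to $a=\xi\alpha$, $b=\xi\beta$, obtaining an automorphism $\phi$ with $\phi(x)=xa$, $\phi(y)=yb$; since $\phi$ fixes $G/M$, $\phi|_{M}$ is $\Lambda$-linear, and from $\phi([x,y])=[xa,yb]=\xi(1+t)$ together with $M=\Lambda\xi$ one gets that $\phi|_{M}$ is multiplication by $1+t$, which is bijective — in particular surjective — because $\phi$ is. Thus $1+t$ acts surjectively on $M$ for \emph{every} $t\in I$. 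Passing to $\bar\Lambda:=\Lambda/\operatorname{Ann}_\Lambda(M)$, which is a finitely generated $\mathbb{Z}$-algebra (as $G$ is $2$-generated) and over which $M$ is a faithful cyclic module, this surjectivity forces $1+\bar t$ to be a unit of $\bar\Lambda$ for every $\bar t\in\bar I$ (Nakayama, a faithful finitely generated module having full support), hence $\bar I\subseteq\operatorname{Jac}(\bar\Lambda)$. Since a finitely generated $\mathbb{Z}$-algebra is a Jacobson ring, this Jacobson radical is nil, and as $\bar I$ is generated by the images of $\bar x-1$ and $\bar y-1$ we conclude $\bar I^{\,n}=0$ for some $n$; that is, $MI^{n}=0$, so $\gamma_{n+2}(G)=1$ and $G$ is nilpotent. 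This establishes the equivalence.
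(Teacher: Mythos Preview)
The paper does not prove this theorem at all: it is quoted verbatim from Caranti--Scoppola \cite{caranti:1991} and used as a black box in the proof of Lemma~\ref{five}. So there is no ``paper's own proof'' to compare against; your write-up is an independent attempt at the cited result.

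Your module-theoretic recasting is correct and is essentially the framework of the original source. The identification $M=\Lambda\xi$, the computation $\delta(\xi)=\xi t$ with $t\in I$, and the reduction of ``$\phi$ is an automorphism'' to ``$1+t$ is bijective on $M$'' are all sound. The implication (ii)$\Rightarrow$(i) via nilpotence of $I$ on $M$ and the geometric-series inverse is clean; the implication (i)$\Rightarrow$(ii) via $M\cong\bar\Lambda$ (cyclic and faithful), $1+\bar I\subseteq\bar\Lambda^{\times}$, $\bar I\subseteq\operatorname{Jac}(\bar\Lambda)=\operatorname{Nil}(\bar\Lambda)$ (finitely generated $\mathbb{Z}$-algebras being Jacobson), and finally $\bar I$ finitely generated $\Rightarrow$ nilpotent, is a nice argument and fully rigorous.

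The one point you flag---that for an \emph{arbitrary} $2$-generated metabelian $G$ the assignment $x\mapsto a$, $y\mapsto b$ extends to a derivation $G\to M$---is indeed the crux, and your Magnus-embedding sketch is a little too telegraphic. Here is how to close it. Write a relator as $r=x^iy^jr'$ with $r'\in\gamma_2(F)$. Since $\bar\delta|_{\gamma_2(F)}$ factors through $M_0=\gamma_2(F_{\mathrm{met}})$ and equals multiplication by $t$ composed with the quotient $M_0\to M$, one gets $\bar\delta(r')=\bigl(\text{image of }r'\text{ in }M\bigr)\cdot t=:m_0t$. A direct expansion gives
\[
\bar\delta(r)=a\,\tfrac{\bar x^{\,i}-1}{\bar x-1}\,\bar y^{\,j}+b\,\tfrac{\bar y^{\,j}-1}{\bar y-1}+m_0t,
\]
and since $a=\xi\alpha$, $b=\xi\beta$, $t=\alpha(\bar y-1)-\beta(\bar x-1)$, the vanishing of $\bar\delta(r)$ for all $\alpha,\beta$ reduces to the two identities
\[
\xi\,\tfrac{\bar x^{\,i}-1}{\bar x-1}\,\bar y^{\,j}+m_0(\bar y-1)=0,\qquad
\xi\,\tfrac{\bar y^{\,j}-1}{\bar y-1}-m_0(\bar x-1)=0
\]
in $M$. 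But these are exactly the two ways of computing $[x^iy^j,y]$ and $[x^iy^j,x]$ in $G$: on the one hand by commutator expansion, on the other using that $x^iy^j=-m_0\in M$ in $G$ (because $r=1$). So both identities hold automatically in any $2$-generated metabelian group, and the endomorphism exists without assuming nilpotence. With this filled in, your proof is complete.
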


We now change  the topic and state some useful general results in group theory. For instance, the next result is about the structure of finite $2$-groups of maximal class  \cite[Corollary 3.3.4(iii)]{leedham-green:}.

\begin{lem}
\label{bi}
Let $G$ be a finite $2$-group of maximal class. Then $G$ is isomorphic to  $D_{2^{n}}$, $Q_{2^{n}}$ or $SD_{2^{n+1}}$ for some $n\geq 3$. 
\end{lem}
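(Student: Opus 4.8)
The plan is to deduce the statement from the classical classification of finite $2$-groups that possess a cyclic subgroup of index $2$: among those, the non-abelian groups are exactly $D_{2^n}$, $Q_{2^n}$, the semidihedral group $SD_{2^n}$ (for $n\ge 4$), and the modular group $M_{2^n}$ of class $2$; of these only the first three have maximal class. So it suffices to prove the key point: \emph{every finite $2$-group $G$ of maximal class, of order $2^n$ with $n\ge 3$, has a cyclic subgroup of index $2$.}

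I would prove this by induction on $n$. For $n=3$ one checks directly that the non-abelian groups of order $8$ are $D_8$ and $Q_8$, each containing a cyclic subgroup of order $4$. For $n\ge 4$ first record the standard fact that $|Z(G)|=2$ (otherwise $G/Z(G)$, of order at most $2^{n-2}$, would have nilpotency class $c(G)-1=n-2$, which is impossible). Then $\bar G:=G/Z(G)$ is a $2$-group of maximal class of order $2^{n-1}$, so by induction it has a cyclic maximal subgroup $\bar C=C/Z(G)$. Since $Z(G)\le Z(C)$ and $C/Z(G)$ is cyclic, $C$ is abelian of order $2^{n-1}$ admitting a cyclic quotient of order $2^{n-2}$; hence $C\cong C_{2^{n-1}}$ or $C\cong C_{2^{n-2}}\times C_2$. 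In the first case we are done, so the real work is to exclude the second.

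Assume $C\cong C_{2^{n-2}}\times C_2$, with $\langle a\rangle$ the cyclic factor of order $2^{n-2}$ and $z_0$ the extra involution. Then $\Omega_1(C)\cong V_4$ is characteristic in $C$, hence normal in $G$, and $G/C_G(\Omega_1(C))$ embeds in $\Aut(V_4)\cong S_3$, whose Sylow $2$-subgroup has order $2$; thus $C_G(\Omega_1(C))$ has index at most $2$ in $G$. It cannot equal $G$, since that would force $\Omega_1(C)\le Z(G)$, contradicting $|Z(G)|=2$; hence $C_G(\Omega_1(C))=C$. Now $\Phi(G)=\gamma_2(G)$ has order $2^{n-2}$ and contains $a^2$ (as $a^2\in\Phi(G)$), so $\gamma_2(G)$ is an index-$2$ subgroup of $C$ containing $\langle a^2\rangle$; inspecting the two possibilities $\gamma_2(G)=\langle a\rangle$ and $\gamma_2(G)=\langle a^2,z_0\rangle$, together with the maximal-class constraints that $\gamma_i(G)/\gamma_{i+1}(G)$ has order $2$ for each $i\ge 2$ and $\gamma_{n-1}(G)=Z(G)$, yields a contradiction. (Alternatively, this exclusion is precisely what \cite[Corollary 3.3.4(iii)]{leedham-green:} and the classical treatments of $2$-groups of maximal class provide, so one may simply cite it.) I expect this step — ruling out the non-cyclic abelian preimage — to be the one delicate point.

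Finally, given a cyclic normal subgroup $\langle a\rangle$ of index $2$, write $G=\langle a,b\rangle$ with $b^2\in\langle a\rangle$ and $a^b=a^r$; since conjugation by $b^2\in\langle a\rangle$ is trivial on $\langle a\rangle$ we get $r^2\equiv 1\pmod{2^{n-1}}$, so $r\equiv\pm 1$ or $2^{n-2}\pm 1\pmod{2^{n-1}}$. The value $r=1$ makes $G$ abelian and $r=2^{n-2}+1$ makes $G$ of class $2$ (the modular group), both excluded since $G$ is non-abelian of class $n-1\ge 3$ for $n\ge 4$; the remaining values $r=-1$ and $r=2^{n-2}-1$ give precisely $D_{2^n}$, $Q_{2^n}$ (according to the order of $b$) and $SD_{2^n}$, each of which is immediately checked to have maximal class. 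Re-indexing the semidihedral family as $SD_{2^{n+1}}$, $n\ge 3$ (these groups have order at least $16$), gives the statement.
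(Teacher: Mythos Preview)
The paper does not prove this lemma at all; it is simply quoted as a well-known classification result with a reference to \cite[Corollary 3.3.4(iii)]{leedham-green:}. So there is no ``paper's proof'' to compare against.

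Your argument follows the classical route and is correct in outline, but the one step you yourself flag as delicate --- ruling out $C\cong C_{2^{n-2}}\times C_2$ in the inductive step --- is not actually carried out. You set up the ingredients (the action on $\Omega_1(C)$, the location of $\gamma_2(G)$ inside $C$) and then assert that ``inspecting the possibilities \dots\ yields a contradiction'' without doing the inspection. Incidentally, there are \emph{three} index-$2$ subgroups of $C=\langle a\rangle\times\langle z_0\rangle$ containing $a^2$, not two: besides $\langle a\rangle$ and $\langle a^2,z_0\rangle$ there is also the cyclic $\langle az_0\rangle$. Since at precisely this point you offer as an alternative to ``simply cite'' the same reference the paper invokes, your proof in its present form ultimately rests on that citation just as the paper does. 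If you want a self-contained argument, this exclusion is exactly where the work lies; one clean way is to show directly that $Z_2(G)=\gamma_{n-2}(G)$ must be cyclic of order $4$ (using that $[\gamma_{n-3}(G),G]=\gamma_{n-2}(G)$ and computing $[g,h^2]$ for suitable $g,h$), and then feed this into the induction.

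The remainder --- the base case $n=3$, and the classification once a cyclic index-$2$ subgroup is in hand via the four square roots of $1$ modulo $2^{n-1}$ --- is correct and cleanly written.
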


\begin{rmk} \label{Dih} 
An important connection between Lemma \ref{bi} and the automorphism group of a finite $p$-group is explored  in  \cite[Theorem 1]{shahriari:1987}, where the author also shows  that the generalized quaternion group  and the semidihedral group  cannot be subgroups of a capable group.
\end{rmk}

\noindent
It is also useful to recall a result due to Blackburn  \cite[Theorem 3.2]{blackburn:1958}.

\begin{thm} \label{Black} 
Let $p$ be an odd prime and  $G$ be a finite $p$-group of order $p^n$ such that $G/ \gamma_{2}(G)$ is an elementary abelian group, $\C(G)=m-1$ for some integer $m\geq 1$, and $\gamma_{i}(G) / \gamma_{i+1}(G) \cong C_{p}$ for  $2 \le i \le m-1.$  Then, for $4 \leq m \leq p +1 $, it holds that $G/\gamma_{m-1}(G)$ and $\gamma_{2}(G)$ are groups of exponent $p$.  Moreover, if $m \leq p$ then the elements of order $p$ in $G$ form a characteristic subgroup of index at most $p$ in $G$.  
\end{thm}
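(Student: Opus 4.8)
My plan is to reformulate all three assertions as statements about the exponents of sections of $G$ and then exploit regularity. \emph{First}, I would record the regularity reductions. Since $\gamma_k(\gamma_2(G)) \le \gamma_{2k}(G)$ for all $k$ and $\gamma_m(G) = 1$, the subgroup $\gamma_2(G)$ is nilpotent of class at most $\lceil m/2 \rceil - 1$, which is $< p$ because $m \le p+1$; hence $\gamma_2(G)$ is a regular $p$-group. Similarly $G/\gamma_{m-1}(G)$ has class $m-2 \le p-1 < p$ and is regular, and if moreover $m \le p$ then $G$ itself has class $m-1 < p$ and is regular. I would then use the standard facts about a regular $p$-group $P$: the sets $\Omega_1(P) = \{x \in P : x^p = 1\}$ and $\mho_1(P) = \{x^p : x \in P\}$ are characteristic subgroups with $[P : \Omega_1(P)] = |\mho_1(P)|$; in particular, a regular $p$-group generated by elements of order dividing $p$ has exponent dividing $p$.

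\emph{Next} --- and this is the heart of the proof --- I would establish two $p$-th power estimates. Fixing generators $x_i \in \gamma_i(G) \setminus \gamma_{i+1}(G)$ for $2 \le i \le m-1$, so that $\gamma_i(G) = \langle x_i, \dots, x_{m-1} \rangle$, the target is
$$ x_i^{\,p} = 1 \quad (2 \le i \le m-1), \qquad g^{\,p} \in \gamma_{m-1}(G) \ \text{ for all } g \in G \setminus \gamma_2(G). $$
To get there I would first pin down the uniserial behaviour of the conjugation action of $G$ on its lower central factors --- this is exactly the point at which the hypothesis $\gamma_i(G)/\gamma_{i+1}(G) \cong C_p$ for $2 \le i \le m-1$ is used --- and then argue by induction on $m$, passing to $G/\gamma_{m-1}(G)$, so that the residual computation is confined to a two-generator subgroup near the bottom of the series. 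The key arithmetic input is the congruence $1 + t + \cdots + t^{p-1} \equiv (t-1)^{p-1} \pmod p$, precisely the identity used in the proof of Lemma~\ref{free}(i): it trades a $p$-th power for a $(p-1)$-fold commutator modulo deeper terms, and the strong restrictions on commutators forced by the uniserial structure (equivalently, the high degree of commutativity of $G$ in this range) keep the Hall--Petrescu collection tractable, the error terms being absorbed into $\gamma_m(G) = 1$ or into $\gamma_{m-1}(G)$.

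\emph{With these estimates in hand}, the three conclusions follow at once. The subgroup $\gamma_2(G) = \langle x_2, \dots, x_{m-1} \rangle$ is regular and generated by elements of order dividing $p$, hence has exponent $p$. The quotient $G/\gamma_{m-1}(G)$ is regular, and since every minimal generating set of $G$ lies in $G \setminus \Phi(G) = G \setminus \gamma_2(G)$, the second estimate shows it is generated by elements of order dividing $p$ and therefore has exponent $p$. Finally, if $m \le p$ then $G$ is regular, so $\Omega_1(G)$ is a characteristic subgroup; moreover $G/\gamma_{m-1}(G)$ has exponent $p$, so $\mho_1(G) \le \gamma_{m-1}(G)$, whence $[G : \Omega_1(G)] = |\mho_1(G)| \le |\gamma_{m-1}(G)| = p$.

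\emph{The main obstacle} is the pair of $p$-th power estimates, with the genuine difficulty concentrated in the boundary case $m = p+1$: there $G$ --- and, when $d(G) = 2$, the two-generator subgroup carrying the collection --- has nilpotency class exactly $p$, so it is not regular, the convenient shortcuts disappear, and one must run the collection formula by hand while tracking all commutators of intermediate weight. This is also the step that forces $p$ to be odd and that is exceptional for $p = 3$. Everything apart from it --- in particular the passage from the distinguished generators $x_i$ to arbitrary elements of $\gamma_2(G)$ or of $G$ --- is routine given the regularity facts above.
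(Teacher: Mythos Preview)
The paper does not prove this theorem: it is stated as a result ``due to Blackburn'' and cited as \cite[Theorem 3.2]{blackburn:1958}, with no argument given. So there is no in-paper proof to compare your proposal against.

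As for the proposal itself: your plan is the standard one and tracks Blackburn's original argument closely. The regularity reductions in your first paragraph are correct, and the derivation of the three conclusions from the two $p$-th power estimates in your third paragraph is clean and correct. You have also accurately located the real content: the estimates $x_i^{\,p}=1$ and $g^{\,p}\in\gamma_{m-1}(G)$, with the boundary case $m=p+1$ the only genuinely delicate one, since there neither $G$ nor the relevant two-generator subgroup is regular and one must run the Hall--Petrescu collection by hand. What you have written at that point, however, is a description of the shape of the argument rather than the argument itself. To turn it into a proof you would need the commutator calculus that occupies the first sections of Blackburn's paper---specifically the uniserial/degree-of-commutativity estimates that force the intermediate Hall--Petrescu terms into $\gamma_m(G)$ or $\gamma_{m-1}(G)$. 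Your invocation of the congruence $1+t+\cdots+t^{p-1}\equiv(t-1)^{p-1}\pmod p$ is the right lever, but by itself it only handles abelian-by-something situations; bounding the higher-weight correction terms is where the work lies, and that step is not carried out in your proposal.
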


To finish with this section, we refer to the classification of  finite $p$-groups of order $p^{4}$, when $p$ is an odd prime greater than $3$,  given by Huppert \cite{huppert:1967} (see Chapter 3, page 346). Analyzing this classification formed by $12$ different isomorphism classes and making calculations, we get the following result.

\begin{thm} \label{goodquotient}
There is only one isomorphism class for finite $p$-groups of maximal class and order $p^{4}$, whenever $p \geq 5$. This isomorphism type, namely the number $12$ in the Huppert's classification, is a semidirect product of an elementary abelian three generator group $\langle b \rangle \times \langle c \rangle \times \langle d \rangle$ with a cyclic group $\langle a \rangle$, satisfying $b^{a}=b$, $c^{a}=bc$, $d^{a}=cd$. This group has the following  presentation: 
$$ \langle a,d \ | \ a^{p}, d^{p}, [d,a,d], [d,a,a,d], [d,a,a,a] \rangle. $$
\end{thm}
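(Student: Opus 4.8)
The plan is to combine the classification of groups of order $p^4$ for $p\ge 5$ with a counting of those that have maximal class, and then to verify that the abstract presentation given in the statement does define this group. First I would recall that a group $G$ of order $p^4$ has maximal class exactly when $c(G)=3$, equivalently $|G/\gamma_2(G)|=p^2$ and $\gamma_2(G)/\gamma_3(G)\cong\gamma_3(G)\cong C_p$; in particular $G$ is two-generated and metabelian. Running through Huppert's list of the $12$ isomorphism types of groups of order $p^4$ (Chapter 3, p.~346 of \cite{huppert:1967}), one checks term by term which ones have centre of order $p$ and lower central series of the stated shape: the abelian types and the types with $|G/\Phi(G)|=p^3$ or $p^4$ are excluded immediately, the two types of class $2$ are excluded since they have $c(G)=2$, and among the remaining two-generated types of class $3$ exactly one survives. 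This is type number $12$, which Huppert describes as the split extension $\langle b\rangle\times\langle c\rangle\times\langle d\rangle\rtimes\langle a\rangle$ with $a$ of order $p$ acting by $b^a=b$, $c^a=bc$, $d^a=cd$; here $b=[c,a]=[d,a,a]$ generates $\gamma_3(G)$ and $c=[d,a]$ together with $b$ generates $\gamma_2(G)$, so indeed $c(G)=3$, $Z(G)=\langle b\rangle$ has order $p$, and $G$ has maximal class.

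Next I would identify $G$ with the group given by the presentation
$$ P=\langle a,d \mid a^{p},\,d^{p},\,[d,a,d],\,[d,a,a,d],\,[d,a,a,a]\rangle. $$
Set $c=[d,a]$ and $b=[d,a,a]=[c,a]$. The relation $[d,a,d]=1$ says $c$ commutes with $d$; combined with $[d,a,a,d]=1$ (which says $b$ commutes with $d$) and $[d,a,a,a]=1$ (which says $b$ commutes with $a$, so $b\in Z(P)$), a short commutator-collection argument shows that $\langle b,c,d\rangle$ is abelian and normal in $P$, with $P=\langle b,c,d\rangle\langle a\rangle$ and $a^p=1$, so $|P|\le p^4$; the action of $a$ on this normal subgroup is precisely $d^a=dc$, $c^a=cb$, $b^a=b$, matching type $12$. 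For the reverse inequality one checks that $P$ surjects onto a group of order $p^4$ (e.g.\ the explicit semidirect product above), forcing $|P|=p^4$ and $P\cong G$. I would phrase this last part as: the relations are exactly the defining relations of the semidirect product, read off from $[d,a]=c$, $[c,a]=b$, $[b,a]=1$, $[c,d]=[b,d]=1$, $a^p=d^p=1$.

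The main obstacle is the bookkeeping in the first step: one must actually go through Huppert's twelve classes and confirm that precisely one has maximal class, which requires care with the parameter families in his list and with the small-prime exclusions (this is why $p\ge 5$ is imposed — for $p=3$ extra isomorphism types of maximal class and order $3^4$ appear). A secondary, more routine, difficulty is the collection computation showing the presentation $P$ has order exactly $p^4$ rather than something smaller; here one uses that the five relators already force $\gamma_4(P)=1$ and $\exp$ of the relevant sections to be $p$, together with a homomorphism onto the concrete group to pin down the order. Everything else is direct verification from the definitions of lower central series and coclass.
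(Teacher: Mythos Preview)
Your proposal is correct and follows essentially the same approach as the paper: the paper itself offers no detailed argument but simply appeals to Huppert's list of the twelve isomorphism classes of groups of order $p^4$ and says the result follows ``analyzing this classification \ldots\ and making calculations.'' You have filled in exactly those calculations---eliminating the non-two-generated and class-$\le 2$ types, isolating type~$12$, and verifying that the stated presentation defines it---so your plan is a fleshed-out version of what the paper sketches rather than a different route.
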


\section{Useful tools and results}

This section is mainly about the construction of automorphisms of order $p$ (especially non-inner) of some finite $p$-groups of our interest. First of all,  recall that a finite $p$-group $G$ is said to be \emph{strongly Frattinian} if  $C_{G}(\Phi(G))=Z(\Phi(G))$.

\vspace{4pt}
In view of  Corollary \ref{d2}, in order to prove the existence of a non-inner automorphism of order $p$ in a finite $p$-group $G$, we may assume that the condition  $d(Z_{2}(G)/Z(G)) = d(G)d(Z(G))$ holds true.
This  assumption gives us a reduction, which we rewrite as a hypothesis in the following definition.
\begin{dfn}
We say that a finite $p$-group $G$ satisfies Hypothesis A if  $$d(Z_{2}(G)/Z(G)) = d(G)d(Z(G)).$$
\end{dfn}

From now onwards, we assume that the finite $p$-groups $G$ we are working with have coclass $3$.  

On the other note, the following two remarks related to finite $p$-groups of coclass $3$ satisfying Hypothesis A can be easily proved. 

\begin{rmk}\label{remark1}
Let $G$ be a  finite $p$-group of coclass $3$ (and nilpotency class greater than $3$). Then $G$ satisfies Hypothesis $A$ if and only if $d(G)=2$. Moreover, if one of these  two equivalent conditions holds, then it follows that  $d(Z(G))=1$ and $Z_{2}(G)/Z(G)$ is  isomorphic to an elementary abelian $p$-group of order $p^2$. Thus, if $G$ is a finite $p$-group of coclass $3$ satisfying Hypothesis $A$, then for all $i= 2, ... ,\C(G) -1$, the factors $Z_{i}(G)/Z_{i-1}(G)$ are of exponent $p$. Moreover, $Z_{3}(G)/Z_{2}(G)$ must be a group of order $p$ or $p^{2}.$ 
\end{rmk}

\begin{rmk} \label{z2ab}
Let $G$ be a finite $p$-group of coclass $3$  satisfying Hypothesis $A$. Then it is claimed that $[Z_{2}(G),\Phi(G)]=1$. In fact,  for any $g \in Z_{2}(G)$ we have that $g^{p}$ is central, i.e $[x,g^{p}]=1$ for all $x \in G$. On the other hand, for any $g \in Z_{2}(G)$ and for any $x \in G$, the equality $[x^{p},g]=[x,g]^{p}=[x,g^{p}]=1$ holds. Consequently, $[Z_{2}(G),\Phi(G)]=[Z_{2}(G),G^{p}][Z_{2}(G),\gamma_{2}(G)]=1$. Furthermore, if $G$ is also strongly Frattinian, then the condition $[Z_{2}(G),\Phi(G)]=1$ is equivalent to $Z_{2}(G) \leq Z(\Phi(G))$.
\end{rmk}

The following results describe some  new properties of groups $G$ satisfying Hypothesis $A$. From now onwards we  denote by $c:=c(G)$ the nilpotency class of the group $G$.

\begin{lem} \label{specialheadgroup}
Let $G$ be a finite $p$-group of coclass $3$  satisfying Hypothesis $A$. If $p>2$, then either $G$ is powerful or $G/\gamma_{3}(G)G^{p}$ is an extraspecial group of order $p^{3}$ and exponent $p.$ Indeed, if $p=2$ and $c >3$, then $G$ has a quotient isomorphic to $D_{8}$.
\proof

We first consider the case $p >2$. It is easy to see that $G/\gamma_{3}(G)G^{p}$ is of exponent $p$, that its nilpotency class is at most $2$, and since $d(G)=2$ it follows that this quotient is also a two-generator group. Thus, the  group $G/\gamma_{3}(G)G^{p}$ is either  an elementary abelian group of order $p^{2}$, or an extraspecial group of order $p^{3}$ and exponent $p$. In the former case it follows that $\gamma_{3}(G)G^{p}$ contains $\Phi(G)$. However, this situation only occurs when $[G,G] \leq G^{p}$, i.e  when $G$ is a powerful group. 

\vspace{4pt} Secondly,  consider $p=2$ and let $P:=G/Z_{c-2}(G)$, whose nilpotency class is $2$. We distinguish two possible cases: $|P|=8$ or $\vert P\vert=16$. In the former case, $P$ is of maximal class and also capable, since $G/Z_{c-2}(G) \cong (G/Z_{c-3}(G))/Z(G/Z_{c-3}(G)).$ Thus, from \cite{shahriari:1987} and taking into account Lemma \ref{bi}, $P$ must be a dihedral quotient of $G$, as desired. In the latter case, i.e. $|P|=16$,  we know that $\exp (P/Z(P))=2$. Thus $P^2\subseteq Z(P)$. Indeed, $\Phi(P)=P^2$, and since $d(P)=2$, it follows that  $|P:P^{2}|=4$ and consequently, we get $P^{2}=Z(P)$. If we write $P=\langle x, y\rangle$, then $\gamma_{2}(P)=\langle [x,y]\rangle$, which is contained in $Z(P)=P^2$ and has order $2$. So, there exists $g\in P$ such that $g^2\notin \gamma_{2}(P)$. Take $\overline{P}:=P/\langle g^{2} \rangle $. Then $\langle \bar{g}, \overline{\gamma_{2}(P)} \rangle \leq \Omega_{1}(\overline{P})$ and $\overline{P}$ is a non-abelian group of o
 rder $8$. Consequently, $\overline{P}$ is isomorphic to a dihedral group of order $8$, and thus $G$ has a quotient isomorphic to $D_8$, as desired.
\endproof
\end{lem}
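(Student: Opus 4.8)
The plan is to handle $p>2$ and $p=2$ by entirely different arguments, since for odd $p$ the statement concerns the quotient $G/\gamma_{3}(G)G^{p}$, while for $p=2$ it produces a dihedral quotient via the upper central series.

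For $p>2$, I would first note that Remark~\ref{remark1} turns Hypothesis~A into $d(G)=2$, so $Q:=G/\gamma_{3}(G)G^{p}$ is a two-generator group of exponent dividing $p$ (as $G^{p}$ is killed) and of nilpotency class at most $2$ (as $\gamma_{3}(G)$ is killed). Such groups are completely known: $Q$ is elementary abelian of order $p^{2}$, or it is the unique extraspecial group of order $p^{3}$ and exponent $p$. In the latter case we are done. In the former, $|G:\gamma_{3}(G)G^{p}|=p^{2}=|G:\Phi(G)|$ forces $\gamma_{3}(G)G^{p}=\Phi(G)=\gamma_{2}(G)G^{p}$; passing to $H:=G/G^{p}$ this reads $\gamma_{2}(H)=\gamma_{3}(H)$, whence $\gamma_{2}(H)=1$ by the usual stabilisation property of the lower central series in a nilpotent group, i.e. $\gamma_{2}(G)\le G^{p}$ and $G$ is powerful.

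For $p=2$ and $c>3$, put $P:=G/Z_{c-2}(G)$. Since $Z_{k}(P)=Z_{k+c-2}(G)/Z_{c-2}(G)$, the group $P$ has class exactly $2$, $Z(P)=Z_{c-1}(G)/Z_{c-2}(G)$, and $d(P)=2$. A short count of the orders of the upper central factors $Z_{i}(G)/Z_{i-1}(G)$, using that $G$ has coclass $3$ and that $|Z_{2}(G)/Z(G)|=p^{2}$ by Remark~\ref{remark1}, shows that the top two factors contribute at most $p^{4}$ in total, while $|P/Z(P)|\ge p^{2}$ because $P$ is a non-abelian two-generator group of class $2$; hence $|P|\in\{8,16\}$. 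If $|P|=8$, then $P\in\{D_{8},Q_{8}\}$, but $P\cong(G/Z_{c-3}(G))/Z(G/Z_{c-3}(G))$ is capable and $Q_{8}$ is not capable by Remark~\ref{Dih} (with Lemma~\ref{bi}), so $P\cong D_{8}$ is a quotient of $G$. If $|P|=16$, I would use that the commutator pairing $P/Z(P)\times P/Z(P)\to\gamma_{2}(P)$ is non-degenerate while $\gamma_{2}(P)\le Z(P)$ has exponent $2$ by Remark~\ref{remark1}, so $P/Z(P)$ is elementary abelian and, being two-generated, $P/Z(P)\cong C_{2}\times C_{2}$; thus $\Phi(P)=P^{2}=Z(P)$ has order $4$ and $\gamma_{2}(P)$ is cyclic of order $2$ inside it. Then there is $g\in P$ with $g^{2}\notin\gamma_{2}(P)$ (otherwise $P^{2}\le\gamma_{2}(P)$ would contradict $|\Phi(P)|=4$ and $d(P)=2$), and $\overline{P}:=P/\langle g^{2}\rangle$ is a non-abelian group of order $8$ in which $\bar g$ and a generator of $\overline{\gamma_{2}(P)}$ are two distinct involutions; hence $\overline{P}\cong D_{8}$ and $G$ has a dihedral quotient of order $8$.

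The main obstacle is the subcase $p=2$, $|P|=16$: getting down to $D_{8}$ requires choosing the right central element to factor out and then checking that the resulting order-$8$ quotient has enough involutions to exclude $Q_{8}$, which is an elementary but fiddly argument. The order bound $|P|\in\{8,16\}$ is the other place where one must be careful, since it genuinely uses both the coclass-$3$ hypothesis and the normalisation $|Z_{2}(G)/Z(G)|=p^{2}$ coming from Hypothesis~A.
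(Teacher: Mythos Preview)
Your proof is correct and follows essentially the same route as the paper's: for $p>2$ you classify the two-generator exponent-$p$ class-$\le 2$ quotient and deduce powerfulness in the abelian case, and for $p=2$ you work with $P=G/Z_{c-2}(G)$, split into $|P|=8$ and $|P|=16$, use capability to eliminate $Q_8$ in the first subcase, and in the second pass to $P/\langle g^2\rangle$ and count involutions to rule out $Q_8$. The only differences are cosmetic: you spell out the coclass count giving $|P|\in\{8,16\}$ and the involution argument more explicitly than the paper does, and you justify $\gamma_2(G)\le G^p$ via $G/G^p$ rather than via $\Phi(G)\le\gamma_3(G)G^p$, but these are equivalent.
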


\begin{lem} \label{autu}
Let $G$ be a finite $p$-group of coclass $3$ satisfying Hypothesis $A$, where $p$ is an odd prime, and let $u$ be a non central element in $\Omega_{1}(Z_{2}(G))$. Then $C_{G}(u)$ is maximal in $G$ and the map $\phi_{u}:G \rightarrow G$ defined by $\phi_{u}(x^{i}m)=(xu)^{i}m$, for all $1 \le i \le  p-1$ and for all $m \in C_{G}(u)$, is an automorphism of $G$ of order $p$.
\proof
Let  $u \in \Omega_{1}(Z_{2}(G)) - Z(G)$. First of all, note that  the map $\alpha : G \to \Omega_{1}(Z(G))$ given by $\alpha(x) = [x,u]$ is a homomorphism.  Hence, the subgroup $M:=C_{G}(u)$ is maximal in $G$. Now  take any two generators $x, y$ in $G$ such that $x \in G - M$, $y \in M - \Phi(G)$, and  define the map ${\phi}_u :G \rightarrow G$ such that $\phi_{u}(x^{i}m)=(xu)^{i}m$, for all $1 \le i \le  p-1$ and for all  $m \in C_{G}(u)$, which obviously fixes every element of $M$. Now it is easy to verify that this map ${\phi}_u$  is an automorphism of $G$, and that it has order $p$. 
\endproof
\end{lem}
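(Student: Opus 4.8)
The plan is to verify the two assertions in turn: first that $M := C_G(u)$ is maximal, then that $\phi_u$ is a well-defined automorphism of order $p$. For the first part, I would argue as follows. Since $u \in \Omega_1(Z_2(G))$, for every $x \in G$ the commutator $[x,u]$ lies in $\Omega_1(Z(G))$, which by Remark \ref{remark1} (under Hypothesis A with $d(G)=2$, so $d(Z(G))=1$) is cyclic of order $p$. The map $\alpha\colon G \to \Omega_1(Z(G))$, $\alpha(x) = [x,u]$, is then a homomorphism: indeed $[xy,u] = [x,u]^y[y,u] = [x,u][y,u]$ because $[x,u] \in Z(G)$. Its kernel is exactly $C_G(u) = M$, and since $u \notin Z(G)$ the map $\alpha$ is nontrivial, so $G/M$ embeds in $C_p$; hence $|G:M| = p$ and $M$ is maximal. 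In particular $\Phi(G) \le M$, so one can pick generators $x,y$ of $G$ with $x \in G \setminus M$ and $y \in M \setminus \Phi(G)$, and every element of $G$ is uniquely of the form $x^i m$ with $0 \le i \le p-1$ and $m \in M$.

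For the second part, the first thing to check is that $\phi_u$ is well defined on this normal form and that it is a homomorphism. Writing $\phi_u(x^i m) = (xu)^i m = x^i u^i m$ (using $u \in Z_2(G)$ and $[u,x] \in Z(G)$, together with $u^p = 1$, one checks $(xu)^i = x^i u^i c$ for a suitable central correction $c$, which I would absorb carefully — this is the one spot needing a short computation), I would verify $\phi_u(x^i m \cdot x^j m') = \phi_u(x^i m)\phi_u(x^j m')$ by reducing the left side to normal form and tracking where the extra factors $u$ land, using that $u$ is central modulo $Z(G)$ and that $\alpha$ is a homomorphism into $Z(G)$. The cleaner route is to realize $\phi_u$ as the lift of a derivation: define $\delta\colon G \to \langle u \rangle \le \Omega_1(Z_2(G))$ by $\delta(x) = u$, $\delta(m) = 1$ for $m \in M$ (more precisely, $\delta$ should land in $\Omega_1(Z(G))$ if we want Lemma \ref{lift} directly, so one may instead need to pass to the derivation with values in the abelian normal subgroup $\langle u, Z(G)\rangle$ or argue that $\langle u \rangle Z(G)$ is abelian and normal, which it is since $u \in Z_2(G)$ and $\langle u \rangle Z(G)$ has class $\le 1$). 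Then $\phi_u(g) = g\,\delta(g)$, and Lemma \ref{lift} gives that $\phi_u$ is an endomorphism, and an automorphism once $\delta$ kills the relevant abelian normal subgroup — since $\delta$ vanishes on $M \supseteq \langle u\rangle Z(G)$, that condition holds.

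Finally, for the order: by Lemma \ref{powers}, $\phi_u^k(g) = \prod_{j=0}^{k}(\delta^j(g))^{\binom{k}{j}}$. Since $\delta(G) \subseteq \langle u \rangle$ and $\delta(u) = 1$ (as $u \in M$), we get $\delta^2 = 1$, so $\phi_u^k(g) = g\,\delta(g)^k$, whence $\phi_u^p(g) = g\,\delta(g)^p = g$ because $u^p = 1$. As $\delta \ne 0$, $\phi_u$ is nontrivial, so it has order exactly $p$. The main obstacle I anticipate is the bookkeeping in the first route — confirming that $\phi_u$ respects the chosen normal form $x^i m$ and is genuinely a homomorphism — which is why I would prefer to phrase the argument through the derivation/Lemma \ref{lift} machinery, where well-definedness and the homomorphism property come for free and only the (easy) verification that $\langle u\rangle Z(G)$ is an abelian normal subgroup on which $\delta$ is eventually trivial remains.
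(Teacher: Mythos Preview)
Your argument for the maximality of $M=C_G(u)$ via the homomorphism $\alpha(g)=[g,u]$ into $\Omega_1(Z(G))\cong C_p$ is correct and is exactly what the paper does. For the second assertion the paper simply writes ``it is easy to verify'', so you are supplying details the paper omits; your direct route is sound. The two computations that make it work are
\[
(xu)^p=x^p u^p[u,x]^{\binom{p}{2}}=x^p
\]
(using $u\in\Omega_1(Z_2(G))$, $[u,x]\in\Omega_1(Z(G))$, and $p\mid\binom{p}{2}$ for odd $p$), and $m^{(xu)^j}=m^{x^j}$ for $m\in M$, which holds because $(xu)^j x^{-j}\in\langle u,[u,x]\rangle\subseteq Z(M)$ (here $u\in Z(M)$ since $M=C_G(u)$). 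With these in hand, well-definedness, the homomorphism property, and $\phi_u^p=\mathrm{id}$ (via $\phi_u^k(x)=xu^k$) all follow.

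The one genuine slip is in your ``cleaner'' derivation route. Lemma~\ref{lift} takes a derivation as \emph{input} and returns an endomorphism; it does not certify that an assignment $\delta(x)=u$, $\delta|_M=1$ is a derivation. Verifying that this $\delta$ is a derivation on $G$ amounts to checking consistency with the relation $x^p\in M$ (i.e.\ $\delta(x^p)=1$, equivalently $(xu)^p=x^p$) and with the conjugation action of $x$ on $M$---precisely the same computations as in the direct route. So your claim that ``well-definedness and the homomorphism property come for free'' from Lemma~\ref{lift} is backwards: the lemma repackages the work, it does not eliminate it. Once you correct this expectation, either route is fine, and your order computation via Lemma~\ref{powers} with $\delta^2=0$ is correct.
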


\begin{lem} \label{gamma3}
Let $G$ be a finite $p$-group of coclass $3$ satisfying Hypothesis $A$,  where $p$ is an odd prime. Assume that  $G$ is strongly Frattinian and that  $\Omega_{1}(Z_{2}(G)) \nleq Z(\gamma_{3}(G)G^{p})$. Then $G$ has a non-inner automorphism of order $p$.
\proof
By  Remark $\ref{z2ab}$ we deduce that $[\Omega_{1}(Z_{2}(G)), \gamma_{3}(G)G^{p}]=1$. Thus, by the hypothesis of the lemma, it follows that   $\Omega_{1}(Z_{2}(G)) \nleq  \gamma_{3}(G)G^{p}$. This implies  that there exists  a non central element $u \in \Omega_{1}(Z_{2}(G))$, which lies in $\Phi(G) - \gamma_{3}(G)G^{p}$. Now  define a map $\phi_{u}$ as in Lemma $\ref{autu}$. If $\phi_{u}$ is an inner automorphism of $G$, then there exists  an element $h \in G -\Phi(G)$ such that $\phi_{u}(g)=g^{h}$, for all $g \in G$. In particular, $h$ centralizes a maximal subgroup of $G$, and so $h$ centralizes $\Phi(G)$. Indeed, since the group $G$ is strongly Frattinian, it follows that $h \in Z(\Phi(G))$, and in particular $h\in \Phi(G)$, which is a contradiction. Hence, $\phi_{u}$ is a non-inner automorphism of $G$ of order $p$, as required.
\endproof
\end{lem}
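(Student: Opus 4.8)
The plan is to produce, from the hypotheses, an automorphism $\phi_u$ of order $p$ of the kind built in Lemma~\ref{autu}, arranged so that it acts nontrivially modulo $\gamma_3(G)G^p$; that feature is exactly what obstructs it from being inner.

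First I would use Remark~\ref{z2ab}: since $G$ is strongly Frattinian, $Z_2(G)\le Z(\Phi(G))$, so in particular $Z_2(G)$ is abelian and $[\Omega_1(Z_2(G)),\gamma_3(G)G^p]=1$ (because $\gamma_3(G)G^p\le\Phi(G)$). Hence, were $\Omega_1(Z_2(G))$ contained in $\gamma_3(G)G^p$, it would lie in $Z(\gamma_3(G)G^p)$, contrary to hypothesis; so $\Omega_1(Z_2(G))\nleq\gamma_3(G)G^p$. Next I would verify that some $u\in\Omega_1(Z_2(G))\setminus\gamma_3(G)G^p$ is non-central. If not, $\Omega_1(Z_2(G))$ would be covered by the two subgroups $Z(G)$ and $\gamma_3(G)G^p$, hence contained in one of them; it is not in $\gamma_3(G)G^p$ by the previous step, and if it were in $Z(G)$ then $\Omega_1(Z_2(G))=\Omega_1(Z(G))$ would be cyclic of order $p$ (recall $d(Z(G))=1$ from Remark~\ref{remark1}), forcing the abelian group $Z_2(G)$ to be cyclic --- impossible since $Z_2(G)/Z(G)\cong C_p\times C_p$.

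Fix such a $u$. Lemma~\ref{autu} then yields an automorphism $\phi_u$ of $G$ of order $p$ that fixes the maximal subgroup $M=C_G(u)$ pointwise --- hence also fixes $\Phi(G)$ pointwise --- while sending a generator $x\notin M$ to $xu$ with $u\notin\gamma_3(G)G^p$. Suppose $\phi_u$ were inner, say equal to conjugation by $h\in G$. Since $\phi_u$ fixes $\Phi(G)$ elementwise, $h\in C_G(\Phi(G))$, and as $G$ is strongly Frattinian, $C_G(\Phi(G))=Z(\Phi(G))\le\Phi(G)$; thus $h\in\Phi(G)$. But then $[g,h]\in[G,\Phi(G)]\le\gamma_3(G)G^p$ for all $g\in G$ --- the inclusion $[G,\Phi(G)]=[G,\gamma_2(G)G^p]\le\gamma_3(G)G^p$ being a routine consequence of the commutator identities, valid since $p$ is odd --- so conjugation by $h$ is trivial on $G/\gamma_3(G)G^p$. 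This contradicts $\phi_u(x)=xu\not\equiv x \pmod{\gamma_3(G)G^p}$. Hence $\phi_u$ is a non-inner automorphism of $G$ of order $p$. I expect the one point needing real care to be the simultaneous choice of $u$ outside both $\gamma_3(G)G^p$ and $Z(G)$; the remaining steps are bookkeeping with the structural facts in Remarks~\ref{remark1}, \ref{z2ab} and Lemma~\ref{autu}.
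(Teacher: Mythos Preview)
Your proof is correct and follows essentially the same route as the paper's: deduce $\Omega_1(Z_2(G))\nleq\gamma_3(G)G^p$, pick a suitable $u$, build $\phi_u$ via Lemma~\ref{autu}, and use the strongly Frattinian condition to force the conjugating element into $\Phi(G)$, which is incompatible with $u\notin\gamma_3(G)G^p$. You are in fact more explicit than the paper on two points it leaves implicit---the existence of a non-central $u$ outside $\gamma_3(G)G^p$ (your union-of-two-subgroups argument), and the inclusion $[G,\Phi(G)]\le\gamma_3(G)G^p$ underpinning the final contradiction (which, incidentally, holds for all primes, not just odd $p$).
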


\begin{lem} \label{YO}
Let $G$ be a finite $p$-group of coclass $3$ satisfying Hypothesis $A$, where $p$ is an odd prime. Assume that  $G$ is strongly Frattinian. Then either $G$ has a non-inner automorphism of order $p$ or $Z_{3}(G)$ contains a subgroup of order at least $p^{4}$ that centralizes $\Phi(G)$. In the latter case, $Z_{3}(G)$ is abelian.
\proof
Since  the group $G$ is strongly Frattinian, we have $Z_{2}(G) \leq Z(\Phi(G))$. Let $u \in  \Omega_{1}(Z_{2}(G)) - Z(G)$, and  $\phi_{u}$ be the induced automorphism of order $p$ as in Lemma $\ref{autu}$. Suppose that  $\phi_u$ is inner.  Then there exists an element $t \in Z_{3}(G) - Z_{2}(G)$ such that $t^{p} \in Z(G)$ and $\phi_{u}(g)=g^{t}$, for all $g \in G$. Set $M:=C_{G}(u)$. Then we have  $t \in C_{G}(M) \leq C_{G}(\Phi(G)) \leq Z(\Phi(G))$. Now  consider the subgroup $H:=\langle Z_{2}(G), t\rangle$ of $Z_3(G)$. Obviously, $H$ is a subgroup of $Z_3(G)$ of order at least $p^{4}$ that centralizes $\Phi(G)$, as required. It is easy to see that with these conditions, $H$ is central and maximal in $Z_{3}(G)$; so $Z_{3}(G)$ is an abelian group.
\endproof
\end{lem}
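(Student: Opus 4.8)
The plan is to examine the automorphism $\phi_u$ of order $p$ produced by Lemma \ref{autu} and to extract information from the hypothesis that it is inner.

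First I would record the structural consequences of the hypotheses. Since $G$ satisfies Hypothesis $A$ and has coclass $3$, Remark \ref{remark1} gives $d(G)=2$, $d(Z(G))=1$, and $Z_2(G)/Z(G)$ elementary abelian of order $p^2$; combined with $|Z(G)|\ge p$ this yields $|Z_2(G)|\ge p^3$. Since $G$ is strongly Frattinian, Remark \ref{z2ab} gives $[Z_2(G),\Phi(G)]=1$ and hence $Z_2(G)\le Z(\Phi(G))$. Now choose a non-central $u\in\Omega_1(Z_2(G))$ and let $\phi_u$ be as in Lemma \ref{autu}: it has order $p$, it fixes $M:=C_G(u)$ pointwise, and $M$ is maximal in $G$, so $\Phi(G)\le M$. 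If $\phi_u$ is non-inner we are in the first alternative and done.

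Assume then $\phi_u$ is inner, say $\phi_u(g)=g^t$ for some $t\in G$. Because $\phi_u$ fixes $M$ pointwise, $t\in C_G(M)\le C_G(\Phi(G))=Z(\Phi(G))$, so $t$ centralizes $\Phi(G)$. Writing $g=x^i m$ with $x\notin M$, $m\in M$ and using $\phi_u(x)=xu$ with $u\in Z_2(G)$, one checks that $g^{-1}\phi_u(g)\in Z_2(G)$ for every $g$, so $[G,t]\le Z_2(G)$, i.e.\ $t\in Z_3(G)$; and $t\notin Z_2(G)$, since $[x,t]=u$ is not central. From $\phi_u^p=\mathrm{id}$ (which follows from $\phi_u^i(x)=xu^i$) we get that conjugation by $t^p$ is trivial, so $t^p\in Z(G)$. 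Then $H:=\langle Z_2(G),t\rangle\le Z_3(G)$ has order at least $p\,|Z_2(G)|\ge p^4$ and centralizes $\Phi(G)$ (both $Z_2(G)\le Z(\Phi(G))$ and $t\in Z(\Phi(G))$), which is the second alternative.

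For the last sentence, observe that no proper subgroup can supplement $\Phi(G)$ in $G$, and $c(G)>3$, so $Z_2(G)\le\Phi(G)$ and $Z_3(G)\le\Phi(G)$; since also $t\in Z(\Phi(G))\le\Phi(G)$, we get $H\le Z(\Phi(G))$ and $Z_3(G)$ centralizes $H$, i.e.\ $H\le Z(Z_3(G))$. By Remark \ref{remark1}, $|Z_3(G):Z_2(G)|\in\{p,p^2\}$, so $H$ is either all of $Z_3(G)$ or maximal in it; in either case $Z_3(G)/Z(Z_3(G))$ is cyclic, hence $Z_3(G)$ is abelian. The main obstacle is the precise localization of the conjugating element $t$ --- establishing $t\in Z_3(G)\setminus Z_2(G)$ and $t^p\in Z(G)$ purely from the fixed-point set and order of $\phi_u$; once that is in hand, the order estimate and the abelianness of $Z_3(G)$ are routine.
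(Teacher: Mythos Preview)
Your argument is correct and follows exactly the paper's approach: assume $\phi_u$ is inner, show the conjugating element $t$ lies in $Z(\Phi(G))\cap\bigl(Z_3(G)\setminus Z_2(G)\bigr)$, and take $H=\langle Z_2(G),t\rangle$; you in fact supply more justification than the paper (which simply asserts the location of $t$ and says ``it is easy to see'' that $H$ is central and maximal in $Z_3(G)$) for the facts $t\in Z_3(G)\setminus Z_2(G)$, $t^p\in Z(G)$, and the abelianness of $Z_3(G)$. One small imprecision: the Frattini non-supplement property alone only yields $Z_3(G)\Phi(G)\neq G$, not $Z_3(G)\le\Phi(G)$; to get the latter, observe that any $z\in Z_3(G)\setminus\Phi(G)$ would belong to a two-element generating set of $G$, and then every weight-$4$ commutator in those generators vanishes, forcing $\gamma_4(G)=1$ and contradicting $c(G)>3$.
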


We finish this section with a remark that will be useful in the next section.

\begin{rmk} \label{tobeinner}
Let $G$ be a finite $p$-group, and  suppose that there exists some $k \in \mathbb{N}$ such that $Z_{k}(G)$ is abelian. Suppose that we can  define, for some $i \in \mathbb{N}$, $p^{i}$ derivations on $G$ that take value in a normal abelian subgroup $H$ contained in $Z_{k}(G)$, and that these derivations extend to $p^{i}$ automorphisms of $G$ of order $p$. Assume that $|Z_{k+1}(G)/Z(G)|=p^{t}$ for some $ t \in \mathbb{N}$. If $t < i$, then we there are at least $p-1$ automorphisms of $G$ that are not inner. On the other hand, if $t=i$, and if we assume  that all these automorphisms are inner, then $[Z_{k+1}(G),G] \leq H$. Furthermore, if every element of $H$ lies in the image of at least one of the derivations produced above, then $[Z_{k+1}(G),G]=H$.
\end{rmk}

\section{Proof of  Main Theorem}

We start reminding the following theorem, which is a reduction to the conjecture.

\begin{thm}\label{known} 
Let $G$ be a finite $p$-group. Then the  conjecture holds true if $G$ satisfies any of the following conditions:

(i) $d(Z_{2}(G)/Z(G)) \ne d(G)d(Z(G))$ (Corollary \ref{d2});

(ii) $G/Z(G)$ is powerful (\cite{abdollahi:2010});

(iii) $G$ is  regular (\cite{schmid:1980});

 (iv) $G$ is not strongly Frattinian (\cite{deaconescu:2002});

(v) $\gamma_2(G)$ is  cyclic (\cite{jamali:2013}).
\end{thm}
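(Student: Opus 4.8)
The plan is to observe that each of the five items is either a direct citation or an immediate reformulation of a result already recorded in the paper, so the proof amounts to assembling these pieces. For item (i), I would argue by contraposition using Corollary~\ref{d2}: if $G$ had no non-inner automorphism of order $p$ at all, then in particular it would have no non-inner automorphism of order $p$ leaving $\Phi(G)$ elementwise fixed, and Corollary~\ref{d2} would force $d(Z_2(G)/Z(G)) = d(G)d(Z(G))$. Taking the contrapositive yields precisely statement (i). For items (ii)--(v), I would simply invoke the corresponding theorems quoted in the Introduction: the powerful-quotient case (ii) is Abdollahi~\cite{abdollahi:2010}, the regular case (iii) is Schmid~\cite{schmid:1980}, the ``not strongly Frattinian'' case (iv) is Deaconescu~\cite{deaconescu:2002}, and the cyclic-derived-subgroup case (v) is Jamali and Viseh~\cite{jamali:2013}.

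Since the statement is a compilation of known reductions rather than a new assertion, there is no genuine mathematical obstacle; the only point requiring a line of reasoning is the passage from the hypothesis-free form of Corollary~\ref{d2} to the ``$\Phi(G)$-fixing'' form, which is handled by the contrapositive remark above. Accordingly, the write-up would consist of one short sentence for (i) and a sentence attributing (ii)--(v) to the respective references, with no computation involved.
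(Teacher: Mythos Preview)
Your proposal is correct and matches the paper's treatment: the paper states Theorem~\ref{known} without any proof, simply embedding the relevant citations in the statement itself, so your one-line contrapositive for (i) together with the attributions for (ii)--(v) is exactly what is intended.
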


\noindent
Let us rewrite these reductions as a hypothesis.

\begin{dfn}
We say that a finite $p$-group $G$ satisfies Hypothesis B, if none of the conditions from (i) to (v) of Theorem \ref{known} holds true for $G$.
\end{dfn}

To begin with, we note that Hypothesis $B$ is much stronger than Hypothesis $A$. Thus all the results proved in the preceding section under Hypothesis A are also true under Hypothesis B, and therefore  can be freely used under  Hypothesis B.  We complete the  proof of  Main Theorem in two parts depending on whether the prime integer $p$ is odd or even.  We first consider the case when $p$ is an odd prime, which constitutes the bulk of the section.

\begin{thm}\label{marco1}
Let $G$ be a finite $p$-group of coclass $3$ satisfying Hypothesis $B$, where $p$ is an odd prime. Then $G$ admits a non-inner automorphism of order $p$, if any one of the following conditions holds true:

(i)  $Z(G)$ is cyclic of order $p^{2}$;

(ii) $|Z(G)| = p$ and  $[Z_{3}(G),\Phi(G)]=1$;

(iii)  $|Z(G)| = p$,   $[Z_{3}(G),\Phi(G)] \neq 1$ and $Z_{3}(G)/Z(G)$ is not elementary abelian;

(iv)  $|Z(G)| =p$,   $[Z_{3}(G),\Phi(G)] \neq 1$ and both  $Z_{3}(G)/Z(G)$ as well as  $Z_{2}(G)$ are elementary abelian. 
\proof
Throughout the proof of this theorem $c>3$  denotes the nilpotency class of $G$. Since the group $G$ satisfies Hypothesis $B$, by Remark \ref{remark1} we have $d(G)=2$, $Z_{2}(G)/Z(G)$ is isomorphic to an elementary abelian group of order $p^2$ and $|Z_{3}(G)/Z_{2}(G)| \in \{p,p^{2}\}$, and moreover, by Lemma \ref{specialheadgroup} we can assume that $G/\gamma_{3}(G)G^{p}$ is an extraspecial group of order $p^{3}$ and exponent $p$ (otherwise the group $G$ would be powerful, and this contradicts Hypothesis $B$).  Indeed, by Lemma \ref{gamma3} we can also assume that $\Omega_{1}(Z_{2}(G)) \leq Z(\gamma_{3}(G)G^{p})$, and by Lemma \ref{YO}, that there exists a subgroup of order at least $p^{4}$ in $Z_{3}(G)$, which centralizes $\Phi(G)$.

\vspace{4pt}

On the other note, let $F$ be the free group generated by two elements, say, $x$ and $y$. We know that $G$ and $G/\gamma_{3}(G)G^{p}$ are both suitable quotients of $F$. Since, clearly, $Z_{2}(G)$ is an abelian normal subgroup of $G$, at the same time, we can  view $Z_{2}(G)$ as an $F$-module, a $G$-module and a $G/\gamma_{3}(G)G^{p}$-module.  Define a family $\Delta$ consisting of derivations of the type $\delta_{g_{1},g_{2}} : F \rightarrow  \Omega_{1}(Z_{2}(G))$ such that $\delta_{g_{1},g_{2}}(x)= g_{1}$ and $\delta_{g_{1},g_{2}}(y)=g_{2}$, where  $g_{1},g_{2} \in \Omega_{1}(Z_{2}(G))$. According to Lemma \ref{determineunique}, these maps are uniquely determined by assigning the value on $x$ and  $y$. 

\vspace{4pt}

Next we claim that all the derivations in  $\Delta$ preserve the  defining relations of the quotient group $G/\gamma_{3}(G)G^{p}$. Indeed, we can assume that $G/ \gamma_{3}(G)G^{p}$ admits the presentation  $ \langle x,y  \ | \ x^{p}, y^{p}, [y,x,y], [y,x,x] \rangle$. Thus, in other words, we have to prove that for any $\delta \in \Delta$,  the relations $\delta(x^{p})=1$, $\delta(y^{p})=1$, $\delta([y,x,y])=1$, and $\delta([y,x,x])=1$ hold true. To start with it, in fact, since $\delta(x) \in \Omega_{1}(Z_{2}(G))$ and $p > 2$,  we get 
\[\delta(x^{p})=\delta(x)^{x^{p-1}}\delta(x^{p-1})= \cdots = \delta(x)^{x^{p-1}+ \cdots +1}=\delta(x)^{p}[\delta(x),x]^{\binom{p}{2}}=1,\]
 for all $\delta \in \Delta$. For all $\delta \in \Delta$, the relation $\delta(y^{p})=1$ follows on the similar lines. (We remark that these  two relations  hold true only when $p>2$, otherwise, the commutator $[\delta(x),x]^{\binom{p}{2}} = [\delta(x),x]$ does not vanish.)

\vspace{4pt}

For any $\delta \in \Delta$  and $g,h \in G$, we have $\delta(gh) = \delta(hg)^{[g,h]}\delta([g,h])$, and since $[Z_{2}(G), \gamma_{2}(G)]=1$, it follows that $\delta(gh) = \delta(hg)\delta([g,h])$. Thus $\delta(g)^{h}\delta(h)= \delta(h)^{g} \delta(g) \delta([g,h])$. Now, since  $Z_{2}(G)$ is a normal abelian subgroup of $G$, it follows that 
\begin{eqnarray*}
\delta([g,h]) &=& \delta(g)^{-1} (\delta(h)^{g})^{-1} \delta(g)^{h}\delta(h) = \delta(g)^{-1} \delta(g)^{h} (g^{-1} \delta(h)g)^{-1} \delta(h)\\
& = & [\delta (g),h][g, \delta(h)],
\end{eqnarray*}
 which is an element of $Z(G)$.

\vspace{4pt}

Let $w:=[y,x]$. Now putting $g = w$ and $h = y$ in the preceding equation, and noting that $\delta(w) \in Z(G)$ and that  $\delta(y) \in Z_2(G)$, we get 
\[\delta([y,x,y]) = \delta([w,y]) = [\delta(w),y][w,\delta(y)]=1.\]
 Similarly, we can prove that $\delta([y,x,x]) =1$. Thus, these equalities settle  the claim about the preservation of the defining relations of  $G/\gamma_{3}(G)G^{p}$.

\vspace{4pt}
By applying Lemma \ref{Der} we obtain $|\Omega_{1}(Z_{2}(G))|^{2}$ derivations from the group $G/\gamma_{3}(G)G^{p}$ to $\Omega_{1}(Z_{2}(G))$. Indeed, since  $\Omega_{1}(Z_{2}(G)) \leq Z(\gamma_{3}(G)G^{p})$, using Remark \ref{naturalwaylift} it is possible to lift these derivations to $|\Omega_{1}(Z_{2}(G))|^{2}$ derivations from $G$ to $\Omega_{1}(Z_{2}(G))$, in a natural way. Moreover, these derivations are trivial on $\Omega_{1}(Z_{2}(G)) $ and thus, by Lemma \ref{lift} these derivations can be extended to $|\Omega_{1}(Z_{2}(G))|^{2}$ automorphisms of $G$. 

\vspace{4pt}
Next we claim that the  automorphisms obtained in the preceding paragraph are of order $p$. Let $\phi$ be such an  automorphism. Then there exists a derivation $\delta$ from $G/\gamma_{3}(G)G^{p}$ to $\Omega_{1}(Z_{2}(G))$ such that  $\phi(g)=g\delta(g\gamma_{3}(G)G^{p})$, for all $g\in G$. Let us start calculating $\phi^2 := \phi \circ \phi$. In fact, for all $g\in G$ 
\begin{eqnarray*}
\phi^{2}(g) &=&\phi(g\delta(g\gamma_{3}(G)G^{p}))=\phi(g)\phi(\delta(g\gamma_{3}(G)G^{p}))\\
&=& g(\delta(g\gamma_{3}(G)G^{p}))^{2}.
\end{eqnarray*}
By a simple inductive argument, we prove that for all $g\in G$, $\phi^{p}(g)=g(\delta(g\gamma_{3}(G)G^{p}))^{p}=g$, and this proves our claim.

\vspace{4pt}

First of all,  suppose that $Z(G) \cong C_{p^{2}}$. Then, as above, we obtain at least $p^{4}$ automorphisms induced by derivations that take value in $\Omega_{1}(Z_{2}(G))$. If all of them are inner, then each of them is induced by conjugation by some element of $Z_{3}(G)$ modulo  $Z(G)$. However, since $|Z_{3}(G)/Z(G)|=p^{3}$, this is not possible. 
So we can assume, for the rest of the proof, that $Z(G) \cong C_{p}.$ We now suppose that  $[Z_{3}(G),\Phi(G)]=1$. In this case,  fix a non central element $u \in \Omega_{1}(Z_{2}(G))$. By Lemma \ref{autu} we know that  $C_{G}(u)$ is a maximal subgroup of $G$. Choose two generators $x,y$  of $G$ such that $[u,y]=1$ and $[x,u]\neq 1$. Let $\delta := \delta_{1,u}$ be a derivation from $\Delta$ whose values on the generating elements are given by  $\delta(x) = 1$ and $\delta(y) = u$.  Since $\delta ([x,y]) = [x,u] \neq 1 $, the derivation $\delta$ induces an automorphism of $G$ of order $p$, say, $\phi$, that does not centralize $\Phi(G)$. If  $\phi$ were inner, then it would be induced by conjugation of an element $t\in Z_{3}(G) - Z_{2}(G)$ such that $[t,\Phi(G)] \neq 1$. However, the last statement is a contradiction with  the assumption $[Z_{3}(G),\Phi(G)] = 1$. Hence, the automorphism $\phi$  must be non-inner, and we get the result of the theorem, in this 
case. 

\vspace{4pt}

Now assume that $[Z_{3}(G),\Phi(G)] \neq 1$. Since, by Lemma $\ref{YO}$, we are assuming that there exists a subgroup in $Z_3(G)$ of order exactly $p^4$ which centralizes $\Phi (G)$, we can also assume, without any loss of generality, that $|Z_{3}(G)|$ is exactly equal to $p^{5}$.  Indeed, we can as well produce at least $p^4$ derivations by assigning values on generators of $G$  in the second center, and consequently,  we obtain  at least $p^{4}$ automorphisms of $G$ of order $p$. Furthermore, in the particular case when $|\Omega_{1}(Z_{3}(G)/Z(G))| \leq p^{3}$, i.e. when $Z_{3}(G)/Z(G)$ is not elementary abelian, there are at most $p^{3}$ inner automorphims of $G$ of order $p$ induced by elements of $Z_{3}(G)$. Thus by a simple argument of counting,  the statement of the theorem holds true, as well, in this case. Finally, if we assume that $|\Omega_{1}(Z_{3}(G)/Z(G))|= p^{4}$ (i.e $Z_{3}(G)/Z(G)$ is elementary abelian) and that $Z_{2}(G)$ is an elementary abelian group, the
 n the theorem again holds true by Remark \ref{tobeinner}, since in this  case the total number of automorphims of $G$ of order $p$ is equal to $|\Omega_{1}(Z_{2}(G))|^{2}=|Z_{2}(G)|^{2} =p^{6}$. 
\endproof
\end{thm}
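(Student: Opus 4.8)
The plan is to argue by counting. We manufacture a large family of automorphisms of $G$ of order dividing $p$, all inducing the identity on $G/Z_2(G)$, and then observe that an inner automorphism with this last property is conjugation by an element of $Z_3(G)$ whose $p$-th power lies in $Z(G)$; such conjugations are parametrized injectively by a subset of $\Omega_1(Z_3(G)/Z(G))$. In each of the four cases the number of automorphisms we build exceeds this bound, so at least one of them, necessarily of order exactly $p$, is non-inner. The automorphisms are produced from derivations valued in $\Omega_1(Z_2(G))$ through Lemma \ref{lift}.

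First I would record the reductions available under Hypothesis $B$: by Remark \ref{remark1} we have $d(G)=2$, the factor $Z_2(G)/Z(G)$ is elementary abelian of order $p^2$, $|Z_3(G)/Z_2(G)|\in\{p,p^2\}$, $Z(G)$ is cyclic, and each $Z_i(G)/Z_{i-1}(G)$ has exponent $p$ for $2\le i\le c(G)-1$; by Lemma \ref{specialheadgroup} the quotient $G/\gamma_3(G)G^p$ is extraspecial of order $p^3$ and exponent $p$ (the alternative that $G$ is powerful is excluded by Hypothesis $B$); by Lemma \ref{gamma3} we may assume $\Omega_1(Z_2(G))\le Z(\gamma_3(G)G^p)$; and by Lemma \ref{YO} we may assume that $Z_3(G)$ is abelian and contains a subgroup of order at least $p^4$ centralizing $\Phi(G)$, as otherwise we are already done. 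Fixing a free group $F=\langle x,y\rangle$ that maps onto both $G$ and $G/\gamma_3(G)G^p$, I would then introduce the family $\Delta$ of derivations $\delta_{g_1,g_2}\colon F\to\Omega_1(Z_2(G))$ determined by $x\mapsto g_1$, $y\mapsto g_2$ for $g_1,g_2\in\Omega_1(Z_2(G))$; these are well defined by Lemma \ref{determineunique}, $\Omega_1(Z_2(G))$ being an $F$-module since $Z_2(G)$ is a normal abelian subgroup of $G$.

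The technical heart is to verify that every $\delta\in\Delta$ annihilates the defining relators $x^p$, $y^p$, $[y,x,y]$, $[y,x,x]$ of $G/\gamma_3(G)G^p$. For the power relators one computes $\delta(x^p)=\delta(x)^p[\delta(x),x]^{\binom p2}=1$, and similarly for $y$; this works because $\delta(x)^p=1$, because $[\delta(x),x]$ is central of order dividing $p$, and because $\binom p2$ is divisible by $p$ exactly when $p$ is odd — the one place where the hypothesis $p\neq 2$ is genuinely used. For the commutator relators one first derives, from $[Z_2(G),\gamma_2(G)]=1$, the identity $\delta([g,h])=[\delta(g),h][g,\delta(h)]$ and notes that its value lies in $Z(G)$; applying this with $g=[y,x]$ and $h=y$ (respectively $h=x$), and using $[Z_2(G),\gamma_2(G)]=1$ once more, gives $\delta([y,x,y])=\delta([y,x,x])=1$. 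Then Lemma \ref{Der}, Remark \ref{naturalwaylift} and Lemma \ref{lift} turn $\Delta$ into $|\Omega_1(Z_2(G))|^2$ automorphisms $\phi$ of $G$ of the form $\phi(g)=g\,\delta(g)$ with $\delta$ valued in $\Omega_1(Z_2(G))$; moreover $\delta$ vanishes on $\Omega_1(Z_2(G))$ (it factors through $G/\gamma_3(G)G^p$, into whose center $\Omega_1(Z_2(G))$ maps, and the induced derivation kills that center), so $\phi$ fixes $\Omega_1(Z_2(G))$ pointwise, the immediate induction $\phi^k(g)=g\,\delta(g)^k$ gives $\phi^p=\mathrm{id}$, and distinct derivations give distinct automorphisms.

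It then remains to run the four cases. In (i), $|\Omega_1(Z_2(G))|\ge p^2$ yields at least $p^4$ automorphisms of the above shape, while the inner ones among them are conjugations by elements of $Z_3(G)$ with central $p$-th power, of which there are at most $|\Omega_1(Z_3(G)/Z(G))|$; a short check using the reductions above shows $|Z_3(G)/Z(G)|=p^3$, so some $\phi$ is non-inner. In (ii) it suffices to take the single derivation $\delta=\delta_{1,u}$ with $u\in\Omega_1(Z_2(G))\setminus Z(G)$ and with generators $x,y$ chosen so that $[u,y]=1\neq[u,x]$ (possible since $C_G(u)$ is maximal by Lemma \ref{autu}): then $\delta([x,y])=[x,u]\neq 1$, so the induced order-$p$ automorphism does not centralize $\Phi(G)$, whereas an inner automorphism of this shape would be conjugation by some $t\in Z_3(G)\setminus Z_2(G)$ with $[t,\Phi(G)]\neq 1$, contradicting $[Z_3(G),\Phi(G)]=1$. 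In (iii), reducing via Lemma \ref{YO} to $|Z_3(G)|=p^5$, the at least $p^4$ constructed automorphisms outnumber the at most $p^3$ available inner ones, since $Z_3(G)/Z(G)$ being non-elementary-abelian of order $p^4$ forces $|\Omega_1(Z_3(G)/Z(G))|\le p^3$. In (iv), $Z_2(G)$ elementary abelian gives $|\Omega_1(Z_2(G))|^2=|Z_2(G)|^2=p^6$ automorphisms against at most $|Z_3(G)/Z(G)|=p^4$ inner ones, so Remark \ref{tobeinner} (applied with $i=6$ and $t=4$) finishes the proof. I expect the relation-checking step of the previous paragraph to be the main obstacle: it is where the oddness of $p$ is indispensable, and where one must keep careful track of which commutators land in $Z(G)$ and which vanish outright; by contrast the case analysis is essentially bookkeeping, its only delicate point being the order count in case (i).
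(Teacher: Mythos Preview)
Your proof is correct and follows essentially the same approach as the paper: the same reductions under Hypothesis~$B$, the same family $\Delta$ of derivations $F\to\Omega_1(Z_2(G))$, the same verification that they descend to $G/\gamma_3(G)G^p$ and lift to order-$p$ automorphisms of $G$, and the same case-by-case counting against $Z_3(G)/Z(G)$ (including the specific derivation $\delta_{1,u}$ in case~(ii)). The one minor slip is your parenthetical reason for $\delta|_{\Omega_1(Z_2(G))}=1$: the induced derivation on $G/\gamma_3(G)G^p$ does \emph{not} in general kill the center (indeed $\delta([y,x])=[\delta(y),x][y,\delta(x)]$ need not vanish), but the conclusion holds simply because $\Omega_1(Z_2(G))\le Z(\gamma_3(G)G^p)\le\gamma_3(G)G^p$ already maps to the identity in the quotient.
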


From now onwards,  in order not to repeat the cases for which the conjecture holds true,  we set a new  hypothesis.  Although in the remaining study we  deal with finite $p$-groups, where $p \ge 5$, the following definition of Hypothesis $C$ is valid for every odd prime.

\begin{dfn}
For an odd prime $p$, we say that a finite $p$-group $G$ satisfies Hypothesis $C$ if $G$ respects Hypothesis $B$ and  all of the following properties hold true:

 (1)  $|Z(G)| = p$;

(2) $[Z_{3}(G),\Phi(G)] \neq1$;

(3) $Z_{3}(G)/Z(G)$ is elementary abelian;

(4) $Z_{2}(G)$ is not elementary abelian.
\end{dfn}

Before we weave the next thread in the proof of the Main Theorem, we review  some of the useful properties we have obtained so far for a finite $p$-group $G$ of nilpotency class $c>3$ and  coclass $3$, satisfying Hypothesis $C$. First of all, we recall that all the quotients of the upper central series of $G$ are elementary abelian. Furthermore, $\Phi(G)=Z_{c-1}(G)$, $G/\Phi(G) \cong C_{p} \times C_{p}$, $Z(G)\cong C_p$, $Z_2(G)/Z(G)\cong C_p\times C_p$, $\vert Z_3(G)\vert=p^5$,  $Z_{3}(G)/Z_{2}(G)$ is elementary abelian isomorphic to $ C_{p} \times C_{p}$, and all the other quotients of the upper central series of $G$ are isomorphic to $C_{p}$.
Moreover, $G/Z_{3}(G)$ is of maximal class, $Z_{2}(G) \cong C_{p^{2}} \times C_{p}$, $\gamma_{3}(G)G^{p}=Z_{c-2}(G)$, and $Z_{3}(G)$ is an abelian group which does not centralize the Frattini subgroup of $G$.

Finally, if $\vert G\vert=p^n$, since $c\geq 4$ and $cc(G)=3$,  then clearly $n\geq 7$. In the case $n=7$ with $p\geq 5$, since $c=7-cc(G)=7-3=4\leq p-1$, it follows from Theorem $10.2$ of \cite{huppert:1967} that $G$ must be regular.  Since we are dealing with irregular $p$-groups, we can  assume that $n\geq 8$.



\vspace{4pt}

In view of the preceding discussion,  we now continue with the proof of the Main Theorem.

\begin{lem}
\label{five}
Let $p \ge 5$ be an odd prime  and  $G$ be a finite $p$-group of coclass $3$ which respects Hypothesis $C$. If the nilpotency class of $G$ is  $5$, then $G$ has a non-inner automorphism of order $p$. 
\end{lem}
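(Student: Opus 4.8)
The plan is to exhibit a large elementary-abelian family of automorphisms of $G$ of order dividing $p$ whose displacements lie inside $\Omega_1(Z_3(G))$, and then to invoke Remark \ref{tobeinner} to conclude that not all of them can be inner. First I would assemble the structural data: since $c=5$ and $cc(G)=3$ we have $|G|=p^8$, and from the properties collected above for a group satisfying Hypothesis $C$, $\Phi(G)=Z_4(G)$ has order $p^6$, while $Z_3(G)=\gamma_3(G)G^p$ is abelian of order $p^5$ with $Z_3(G)/Z(G)$ elementary abelian and $Z(G)\cong C_p$; hence $Z_3(G)^p=Z(G)$ and $H:=\Omega_1(Z_3(G))$ is elementary abelian of order $p^4$. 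Since $Z_3(G)$ is abelian, $H$ is characteristic in $Z_3(G)$, hence normal in $G$, with $H\le Z_3(G)=Z(Z_3(G))$. Moreover, by Lemma \ref{specialheadgroup} ($G$ is not powerful under Hypothesis $B$), $G/Z_3(G)=G/\gamma_3(G)G^p$ is extraspecial of order $p^3$ and exponent $p$, so it admits the presentation $\langle x,y\mid x^p,y^p,[y,x,x],[y,x,y]\rangle$; I fix a free group $F=\langle x,y\rangle$ mapping onto $G$, hence onto $G/Z_3(G)$, compatibly with this presentation, and view $H$ as an $F$-module, noting that the action factors through the class-$2$ group $G/Z_3(G)$ because $Z_3(G)$ centralizes $H$.

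For each pair $(h_1,h_2)\in H\times H$ let $\delta_{h_1,h_2}\colon F\to H$ be the unique derivation (Lemma \ref{determineunique}) with $\delta(x)=h_1$ and $\delta(y)=h_2$. I would show that, for $(h_1,h_2)$ ranging over an $\mathbb{F}_p$-subspace $K\le H\times H$ of codimension at most $2$, the derivation $\delta_{h_1,h_2}$ kills all four relators. The relators $x^p,y^p$ are handled by Lemma \ref{free}(i): since $\delta(x)\in H$ has order dividing $p$ and $[H,{}_{p-1}G]\le[Z_3(G),{}_3G]=1$ (here $p\ge 5$ ensures $p-1\ge 3$), the correction term vanishes and $\delta(x^p)=\delta(y^p)=1$ for every pair. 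For the weight-$3$ relators, the argument of Lemma \ref{free}(ii) yields $\delta(\gamma_3(F))\le[\delta(F),{}_2F]\le[H,{}_2G]\le Z(G)$; since $(h_1,h_2)\mapsto\delta_{h_1,h_2}$ is $\mathbb{F}_p$-linear, the map $(h_1,h_2)\mapsto(\delta_{h_1,h_2}([y,x,x]),\delta_{h_1,h_2}([y,x,y]))$ is $\mathbb{F}_p$-linear from $H\times H$ to $Z(G)\times Z(G)\cong\mathbb{F}_p^{\,2}$, so its kernel $K$ satisfies $\dim_{\mathbb{F}_p}K\ge 8-2=6$.

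For $(h_1,h_2)\in K$ the derivation descends to $G/Z_3(G)\to H$ by Lemma \ref{Der}, and since $H\le Z(Z_3(G))$ it lifts by Remark \ref{naturalwaylift} to a derivation $\delta\colon G\to H$ with $\delta(Z_3(G))=1$; by Lemma \ref{lift} the map $\phi_\delta(g)=g\delta(g)$ is then an automorphism of $G$. As $\delta(G)\le H$ and $\delta|_H=0$ we have $\delta\circ\delta=0$, so Lemma \ref{powers} gives $\phi_\delta^{\,p}(g)=g\,\delta(g)^p=g$; thus $\phi_\delta$ has order dividing $p$ and is the identity precisely when $(h_1,h_2)=(1,1)$. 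Distinct pairs give distinct automorphisms, so this produces $|K|\ge p^6$ automorphisms of order dividing $p$, all displacing $G$ into $H\le Z_3(G)$. Since $Z_3(G)$ is abelian and $|Z_4(G)/Z(G)|=p^6/p=p^5<p^6\le|K|$, Remark \ref{tobeinner}, applied with $k=3$, $i=\dim_{\mathbb{F}_p}K\ge 6$ and $t=5$, produces at least $p-1$ non-inner automorphisms among them; being non-trivial of order dividing $p$, each has order exactly $p$, which proves the lemma.

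The step I expect to be the main obstacle is the relator bookkeeping in the second paragraph: one must check carefully that $\delta$ of the weight-$3$ relators $[y,x,x]$ and $[y,x,y]$ really lands in $Z(G)$, so that only two linear conditions are imposed and $\dim_{\mathbb{F}_p}K\ge 6$, and that the binomial correction term in $\delta(x^p),\delta(y^p)$ genuinely vanishes at the smallest admissible prime $p=5$. The identity $|\Omega_1(Z_3(G))|=p^4$, which makes $p^6$ strictly exceed $|Z_4(G)/Z(G)|=p^5$, is exactly what forces the non-inner automorphism.
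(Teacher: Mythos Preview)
Your proof is correct and takes a genuinely different, more direct route than the paper's.

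The paper argues in two stages. First it constructs only $p^{5}$ automorphisms, by choosing derivation values in a specific order-$p^{3}$ subgroup $\Omega_{1}(\langle Z_{2}(G),t\rangle)$, where $t\in Z_{3}(G)$ centralises $\Phi(G)$ (this element $t$ is produced by first running the arguments of Lemmas~\ref{autu} and~\ref{YO}). Since $|Z_{4}(G)/Z(G)|=p^{5}$, this count alone is not enough; the paper then assumes all $p^{5}$ automorphisms are inner, deduces from Remark~\ref{tobeinner} and an analysis of the lower central series that $G$ is metabelian, and finally invokes Theorem~\ref{car} (Caranti--Scoppola) to upgrade to $p^{6}$ automorphisms, at which point Remark~\ref{tobeinner} finishes the job.

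You instead allow derivation values in all of $H=\Omega_{1}(Z_{3}(G))$, which under Hypothesis~$C$ has order $p^{4}$. The eight-dimensional space $H\times H$ then meets the two $\mathbb{F}_{p}$-linear constraints $\delta([y,x,x])=\delta([y,x,y])=1$ (landing in $Z(G)\cong\mathbb{F}_{p}$) in a subspace of dimension at least~$6$, yielding $\ge p^{6}$ automorphisms in one step and bypassing both the metabelian reduction and Theorem~\ref{car} entirely. The trade-off is structural: the paper deliberately writes the first half of its proof of Lemma~\ref{five} so that it can be quoted verbatim in the proof of Theorem~\ref{main}; your streamlined argument, while cleaner for Lemma~\ref{five} itself, does not supply that reusable machinery.
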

\begin{proof}
We remark that the results developed through the following four paragraphs do not require the condition that the nilpotency class of $G$ is $5$. We'll also use these results in the proof of Theorem \ref{main}.

\vspace{4pt}

 Let  $u \in \Omega_{1}(Z_{2}(G)) - Z(G)$. According to the structures of $Z(G)$ and $Z_2(G)$, we can write $Z_2(G) = \langle v\rangle\times \langle u\rangle$, where $v \in Z_2(G)$ such that $v^p$ generates $Z(G)$. Let $z$ generate $Z(G)$. Then  $Z(G)=\langle v^p \rangle=\langle z\rangle$. Since $|Z(G)| = p$, it follows that $M:=C_{G}(u)$ is maximal in $G$. Choosing  generators $x, y$ for $G$ such that $x \in G - M$, $y \in M - \Phi(G)$, we define $\phi_{u}$ as in Lemma \ref{autu}. Without any loss of generality, we can also assume that $[x,u]=z$, $[x,v]=z$ and $[y,v]=z$. 

\vspace{4pt}

If  $\phi_{u}$ is a non-inner automorphism of order $p$, then we are done. Otherwise, there exists an element $t \in Z_{3}(G) - Z_{2}(G)$ such that $[x,t]=u, [y,t]=1$, $t^{p} \in Z(G)$ and $t \in Z(M)$. In particular, since $Z(M) \leq C_{G}(\Phi(G))= Z(\Phi(G))$, it follows that $t\in Z(\Phi(G))$ as well. Now  consider the subgroup $\langle Z_{2}(G),t \rangle$. This subgroup is abelian of exponent $p^{2}$, and taking into account that $t^{p} \in Z(G)$, this subgroup cannot be a two-generator subgroup. Since $\langle Z_{2}(G),t \rangle$ is abelian, we can always find an element $h \in  \langle Z_{2}(G), t \rangle - Z_2(G)$ such that 
\[\langle Z_{2}(G),t \rangle  \cong \langle u \rangle \times \langle v^p \rangle \times \langle h \rangle. \]
Note that $h \in Z(\Phi(G))$.  In particular, we have that $t=v^{i}u^{j}h^{k}$, for some integers $i, j, k$.  Since $[x,t]=u$,  we get $u=[x,v^{i}u^{j}h^{k}]=[x,v^{i}][x,u^{j}][x,h^{k}]=z^{i+j}[x,h^{k}]$, and consequently $[x,h^{k}]=z^{-i-j}u$, which is an element of  $\Omega_{1}(Z_{2}(G))$. Similarly, since $[y,t]=1$, we get 
\[1=[y,v^{i}u^{j}h^{k}]=[y,v^{i}][y,u^{j}][y,h^{k}]=[y,v^{i}][y,h^{k}]=z^{i}[y,h^{k}],\]
 and consequently $[y,h^{k}]=z^{-i}$, which is an element of $Z(G)$.  Set $w:=h^{k}$.  Then we can write  
\[\Omega_{1}(\langle Z_{2}(G),t \rangle )= \langle u \rangle \times \langle v^p \rangle \times \langle w \rangle,\]
\[ \Omega_{1}(\langle Z_{2}(G))=\langle u \rangle \times \langle v^p \rangle=\langle u \rangle \times \langle z \rangle\]
 and 
\[Z(G)=\langle v^p\rangle=\langle z\rangle.\]

\vspace{4pt}

We now define a family of assignments $ x \rightarrow a$, $y \rightarrow b$, with $a \in \Omega_{1}( \langle Z_{2}(G), t \rangle)$  and $b \in \Omega_{1}( Z_{2}(G))$. Next we check that these assignments extend to derivations that preserve the relations of $G/Z_{c-2}(G)=G/{\gamma}_3(G)G^p$. Recall that, in this case, $G/{\gamma}_3(G)G^p$ is an extraspecial $p$-group of exponent $p$ and order $p^3$, and have presentation 
\[\langle x,y \ | \ x^{p}, \ y^{p}, \ [y,x,x], \ [y,x,y]  \rangle.\]

\vspace{4pt}

Let $\delta$ be any derivation obtained by one of the above assignments. Since $p \ge 5$, we have 
 \[\delta(x^{p})=\delta(x)^{p}[\delta(x),x,x]^{\binom{p}{3}} =1\]  
and  
\[\delta(y^{p})=\delta(y)^{p}[\delta(y),y,y]^{\binom{p}{3}}=1.\]
 Further, since  $\Omega_{1}( \langle Z_{2}(G), t \rangle) \leq C_{G}(M) \leq Z(\Phi(G))$,  applying $\delta$ to the equation $yx=xy[y,x]$,  we get  $\delta(y)^{x}\delta(x)=\delta(x)\delta(y)\delta([y,x])$, and consequently $\delta([y,x])=[\delta(y),x]$ is an element of $Z(G)$. 

\vspace{4pt}

Now using  the fact that $\delta([y,x])\in Z(G)$, applying  $\delta$ to the equality $[y,x]x = x[y,x][y,x,x]$, it follows that $\delta([y,x])\delta(x)=\delta(x)\delta([y,x])\delta([y,x,x])$, and as a consequence, $\delta([y,x,x])=1$. Analogously, using the equality $[y,x]y=y[y,x][y,x,y]$ and proceeding as above, we get  $\delta([y,x])\delta(y)=\delta(y)\delta([y,x])\delta([y,x,y])$, which implies that $\delta([y,x,y])=1$. Since 
\[\Omega_{1}(\langle Z_{2}(G),t\rangle ) \leq Z_3(G) \le Z(Z_{c-2}(G)) = Z(\gamma_{3}(G)G^{p}),\]
by Remark \ref{naturalwaylift} we obtain $p^5$ derivations of $G$ taking values in $\Omega_{1}(\langle Z_{2}(G),t\rangle )$. These derivations then give rise to $p^5$ automorphisms of $G$ of order $p$. If some of the automorphisms obtained above is non-inner, then we are done. Otherwise, since $|Z_4(G)/Z(G)| = p^5$, it follows from Remark  \ref{tobeinner} that  $[Z_{4}(G),G] = \Omega_{1}(\langle Z_{2}(G),t\rangle )$. Thus  every element of $Z_4(G)$ induces an automorphism of order $p$ by conjugation, and therefore it follows that  $Z_{4}(G)/Z(G)$ is elementary abelian. This implies that $Z_{4}(G)$ is a five generator group of exponent $p^{2}$ and of the nilpotency class  $\le 2$. 

Recall that, since the nilpotency class of $G$ is $5$,  $Z_{3}(G)=Z_{c-2}(G)=\gamma_{3}(G)G^{p}$ and $Z_{4}(G)=\Phi(G)$. Continuing with the preceding paragraph,  if the nilpotency class of $Z_{4}(G)$ is $1$, it follows that $\Phi(G) = Z_4(G)$ is an abelian group,  and consequently $[Z_{3}(G),\Phi(G)]=1$, which is in contradiction with the given hypothesis. So assume that  the nilpotency class of $Z_{4}(G)$ is  $2$. In this  case we explore  the lower central series of $G$. Clearly $\gamma_{5}(G)=Z(G)$. Now $p^{2} \leq |\gamma_{4}(G)|\leq p^{3}$. Indeed, 
\[|\gamma_{4}(G)| = |[\gamma_{3}(G),G]| \leq |[Z_{3}(G),G]|=|\Omega_{1}(Z_{2}(G))|=p^{2},\]
 and consequently $\gamma_{4}(G)= \Omega_{1}(Z_{2}(G))$. Now, since 
\[p^3\leq |\gamma_{3}(G)| = |[\gamma_{2}(G),G]| \leq |[Z_{4}(G),G]|=|\Omega_{1}(\langle Z_{2}(G),t \rangle)|=p^{3},\]
 we have $\gamma_{3}(G)=\Omega_{1}(\langle Z_{2}(G),t \rangle)$. Thus, since the group $G$  is generated by two elements, the quotient $\gamma_{2}(G)/\gamma_{3}(G)$ is cyclic of order $p$ or $p^{2}$. If  $\gamma_{2}(G)/\gamma_{3}(G) \cong C_{p^{2}}$, then $G/\gamma_{2}(G) \cong C_{p^{2}} \times C_{p}$, which is not possible. As a consequence, $\gamma_{2}(G)/\gamma_{3}(G)$ is cyclic of order $p$. Since $\gamma_{3}(G)$ is maximal central subgroup in $\gamma_{2}(G)$,  this implies that $\gamma_{2}(G)$ is abelian. Thus $G$ is a two generator metabelian $p$-group. By Theorem \ref{car},  it now follows that every assignment $x \rightarrow a$ and $y \rightarrow b$ with $a,b \in \gamma_{3}(G)$ extends to an automorphism of $G$. We claim that  all of these assignments  extend to automorphisms of order $p$. 
Let $\delta$ be a derivation associated to such an assignment, and let $\phi$ be the induced automorphism of $G$, i.e. $\phi(g) :=g\delta(g)$, for all $g \in G$. Then by Lemma \ref{powers} we have for all $i \in \mathbb{N}$ and all $g\in G$ that
$$\phi^{i}(g)=\prod_{j=0}^{i} (\delta^{j}(g))^{\binom{i}{j}}.$$

It follows that all the terms in the above product except for the first three terms are $1$, i.e. 
\[\phi^{i}(g)=g\delta(g)^{i}(\delta(\delta (g)))^{\binom{i}{2}}.\]
In fact, by Lemma \ref{free}, $\delta(G) =\gamma _{3}(G)$, $\delta^{2}(G)= \delta(\gamma_{3}(G))=\gamma_{5}(G)=Z(G)$ and $\delta^{3}(G)=\{1\}$. Therefore, since $p \geq 5$ and the nilpotency class of the group is $5$, all of these  automorphisms have order $p$. Moreover, since $|\gamma_{3}(G)|^{2}=p^{6}$ and $\gamma_{3}(G) \leq \Omega_{1}(Z_{3}(G)) \cap \Omega_{1}(\gamma_{2}(G))$, we obtain $p^{6}$ automorphisms of $G$ of order $p$ induced by the above process.  Applying Remark \ref{tobeinner} once more, the proof of the theorem is now complete. 
\end{proof}

\vspace{4pt}

Notice that, in the preceding Lemma, if $p \geq 7$, then the group $G$ becomes regular. So the preceding  lemma is necessary,  only when $p=5$. In the light of this lemma, we now onwards assume that the nilpotency class $c$ of the group we are considering is at least $6$. We now prepare to finishing the proof of  Main Theorem for all $p \ge 5$.  Before  weaving the final thread, we analyze the quotient  $G/Z_{c-3}(G)$ for any finite $p$-group $G$ of coclass $3$ satisfying Hypothesis $C$ in the following result.

\begin{lem}\label{expogood}
Let  $p \geq 5$ and  $G$ be a finite $p$-group of coclass $3$  and nilpotency class at least $6$ such that $G$ satisfies Hypothesis $C$.  Then $G/Z_{c-3}(G)$ is a maximal class group of exponent $p$.
\end{lem}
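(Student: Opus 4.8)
The plan is to show that $G/Z_{c-3}(G)$, which we already know to be a group of maximal class (since $G/Z_3(G)$ is of maximal class and $c \ge 6$ forces $c-3 \ge 3$), has exponent $p$. First I would fix $Q := G/Z_{c-3}(G)$ and record its order: since $|G| = p^n$ with $n = c + 3$ and $|Z_{c-3}(G)| = p^{c-3-2}\cdot |Z_2(G)| = p^{c-3-2+3} = p^{c-2}$ (using that all the top quotients $Z_i/Z_{i-1}$ for $i \ge 4$ are cyclic of order $p$ under Hypothesis $C$, together with $|Z_2(G)| = p^3$ and $|Z_3(G)| = p^5$, hence $|Z_3(G)/Z_2(G)| = p^2$), one computes $|Q| = p^{n-(c-2)} = p^{5}$. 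So $Q$ is a maximal class group of order $p^5$, i.e. $\C(Q) = 4$.

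Next I would invoke Theorem \ref{Black} (Blackburn). The group $Q$ satisfies $Q/\gamma_2(Q)$ elementary abelian (since $d(G) = 2$ and $G/\Phi(G)$ is elementary abelian, this passes to $Q$), $\C(Q) = m-1$ with $m = 5$, and $\gamma_i(Q)/\gamma_{i+1}(Q) \cong C_p$ for $2 \le i \le 4$ (these are the maximal-class conditions, which hold because $|Q| = p^5$ and $\C(Q) = 4$). Now $m = 5 \le p+1$ precisely when $p \ge 4$, i.e. for all $p \ge 5$, and moreover $m = 5 \le p$ holds as soon as $p \ge 5$. Hence Theorem \ref{Black} applies and tells us that $Q/\gamma_{m-1}(Q) = Q/\gamma_4(Q)$ and $\gamma_2(Q)$ are of exponent $p$, and that the elements of order $p$ in $Q$ form a characteristic subgroup of index at most $p$.

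From here the goal is to upgrade "exponent $p$ on a big characteristic subgroup" to "exponent $p$ on all of $Q$." The key extra ingredient is that $Q$ is capable: indeed $Q = G/Z_{c-3}(G) \cong \bigl(G/Z_{c-4}(G)\bigr)/Z\bigl(G/Z_{c-4}(G)\bigr)$ since $c \ge 6$ guarantees $c-4 \ge 2$ so that $Z_{c-3}(G)/Z_{c-4}(G) = Z(G/Z_{c-4}(G))$. By Remark \ref{Dih} (the result of Shahriari) a capable $p$-group of maximal class cannot have a generalized quaternion or semidihedral section issue at $p=2$, but more relevantly for odd $p$ we argue directly: the subgroup $\Omega_1(Q)$ of elements of order $p$ is characteristic of index $1$ or $p$. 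If the index were $p$, pick $g \in Q \setminus \Omega_1(Q)$; then $g$ has order $p^2$ and $g^p \in \gamma_2(Q)$ would be a nontrivial element — but I would show this contradicts capability together with the structure of $\gamma_2(Q)$, using the description of maximal class $p$-groups of order $p^5$ (or, more cleanly, Theorem \ref{goodquotient} applied to an appropriate $p^4$-quotient) to force $\Omega_1(Q) = Q$.

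The main obstacle I anticipate is exactly this last step: ruling out the index-$p$ case. Blackburn's theorem genuinely allows a maximal-class group where the elements of order $p$ form a proper subgroup (these are the "exceptional" maximal class groups with a cyclic maximal subgroup quotient), so one must use something special about $Q$ — namely that it is a central quotient of another group, hence capable — to exclude this. I expect the cleanest route is: reduce to the $p^4$-quotient $Q/\gamma_4(Q)$, note it must be the unique maximal-class group of order $p^4$ from Theorem \ref{goodquotient} (which has exponent $p$), deduce $Q^p \le \gamma_4(Q)$, and then combine $\gamma_4(Q)$ having order $p$ and being central with the fact that if $Q$ had an element of order $p^2$ its $p$-th power would generate $\gamma_4(Q) = Z(Q)$, making $Q$ have a cyclic center complemented appropriately — a configuration incompatible with capability by Remark \ref{Dih}'s circle of ideas (a maximal class group of order $\ge p^4$ that is capable must have exponent $p$ for $p \ge 5$). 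Assembling these pieces gives $\exp(Q) = p$, completing the proof.
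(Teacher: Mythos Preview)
Your computation of $|Q|$ is off by a factor of $p$, and this error propagates through the rest of the argument. You correctly note that $|Z_3(G)/Z_2(G)| = p^2$, but then the formula $|Z_{c-3}(G)| = p^{c-3-2}\cdot|Z_2(G)|$ silently treats every step $Z_i/Z_{i-1}$ for $i \ge 3$ as having order $p$. In fact $|Z_{c-3}(G)| = |Z_3(G)|\cdot p^{(c-3)-3} = p^5\cdot p^{c-6} = p^{c-1}$, so $|Q| = p^{(c+3)-(c-1)} = p^4$ and $\C(Q) = 3$, not $p^5$ and class $4$. Thus when you invoke Theorem~\ref{Black} on $Q$ itself you only get $\exp(Q/\gamma_3(Q)) = p$, which is not yet what you want; and your subsequent plan to pass to $Q/\gamma_4(Q)$ and quote Theorem~\ref{goodquotient} collapses, since $\gamma_4(Q)$ is trivial.

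The attempt to close the remaining gap via capability is also not justified as written. Remark~\ref{Dih} concerns $2$-groups (quaternion and semidihedral groups as subgroups of capable groups); it says nothing that directly forces a capable maximal-class $p$-group of odd order to have exponent $p$, and you never actually prove the asserted incompatibility. The phrases ``I expect the cleanest route is'' and ``by Remark~\ref{Dih}'s circle of ideas'' are where the real work would have to happen, and it is not done.

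The paper avoids both problems by going one level deeper: it applies Theorem~\ref{Black} not to $Q$ but to $G/Z_{c-4}(G)$, which has order $p^5$ and class $4$ (so $m = 5 \le p$ for $p \ge 5$). Blackburn then gives that $(G/Z_{c-4}(G))\big/\gamma_4(G/Z_{c-4}(G)) \cong G/Z_{c-3}(G)$ itself has exponent $p$, with no residual case to rule out. For $c \ge 7$ this is immediate since $c-4 \ge 3$ makes $G/Z_{c-4}(G)$ maximal class; for $c = 6$ one first factors out a suitable central subgroup of $G/Z_2(G)$ to produce a maximal-class quotient of order $p^5$ and then argues the same way. Incidentally, once you know $|Q| = p^4$, an even shorter route is available: Theorem~\ref{goodquotient} says there is a unique maximal-class group of order $p^4$ for $p \ge 5$, and its presentation visibly has exponent $p$.
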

\begin{proof}
First of all, with the conditions of the lemma, it is easy to prove that   $G/Z_{c-3}(G)$ is a group of order $p^4$ and nilpotency class $3$. In particular, $G/Z_{c-3}(G)$ is a maximal class group. It only remains to prove that  the exponent of this quotient is $p$. To prove this,  we consider the quotient $G/Z_{c-4}(G)$, and  split the proof in two cases: $c\geq 7$ and $c=6$. In the former case,  the quotient $G/Z_{c-4}(G)$ is of maximal class, as $c-4 \ge 3$. Applying Theorem \ref{Black} to the group $G/Z_{c-4}(G)$, we get that $G/Z_{c-4}(G)/Z(G/Z_{c-4}(G))$, which is isomorphic to $G/Z_{c-3}(G)$,  is in fact, of exponent $p$. In the latter case, i.e. $c=6$, clearly $Z_{c-3}(G)=Z_{3}(G)$. Set $\overline{G}=G/Z_{2}(G)$, and  consider $\gamma \in Z_{2}(\overline{G})-Z(\overline{G})$. Since $\gamma \not\in Z(\overline{G})$, there must exists an element $g$ (say) in the generating set of $G$ such that $[\gamma, g] \neq 1$. Since $[\gamma, g] \in Z(\overline{G})$, there exists a
 n element $h \in Z(\overline{G})$ such that $Z(\overline{G})  = \langle [\gamma, g], h \rangle$. Clearly, the cyclic subgroup $\langle h \rangle$ of order $p$  is central, and therefore normal in $\overline{G}$. Finally, factoring out the group $\overline{G}$ by  $\langle h \rangle$, we obtain a maximal class group of class $4$. That the exponent of $G/Z_{c-3}(G)$ is $p$, now follows from Theorem \ref{Black} as argued above. 
\end{proof}

Before proceeding further, we recall some basic definitions on finite $p$-groups of maximal class. Let $G$ be a $p$-group of maximal class of order $p^n$. We write $G_i={\gamma}_i(G)$ for $i\geq 2$ and $G_0=G$. We define $G_1=C_G(G_2/G_4)$ (the action of $G$ on $G_2/G_4$ being induced by conjugation). We consider the so-called two steps centralizers $C_G(G_i/G_{i+2})$ for $1\leq i\leq n-2$. All these subgroups are characteristic and maximal in $G$. Since $[G_1, G_1]=[G_1, G_2]\leq G_4$, we have that $C_G(G_1/G_3)=G_1$ and, consequently, it is enough to consider the two-step centralizers for $2\leq i\leq n-2$. We say that $s\in G$ is a uniform element if $s\notin {\bigcup}_{i=2}^{n-2}C_G(G_i/G_{i+2})$. The reader is referred to \cite{leedham-green:} for more details on this material.

\vspace{4pt}

The following result completes the proof of Main Theorem for $p \ge 5$.

\begin{thm}
\label{main}
Let $p \ge 5$, and  $G$ be a finite $p$-group of coclass $3$ satisfying Hypothesis $C$, whose nilpotency class is at least $6$. Then $G$ has a non-inner automorphism of order $p$.
\end{thm}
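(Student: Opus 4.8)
The plan is to continue the strategy of Lemma \ref{five} and Lemma \ref{expogood}: use the uniform-element machinery for the maximal class quotient $G/Z_{c-3}(G)$ to exhibit enough derivations into a suitable abelian section of $Z_3(G)$, lift them to $p^i$ automorphisms of order $p$, and then win by the counting mechanism of Remark \ref{tobeinner}. First I would fix the structural data already recorded under Hypothesis $C$: $d(G)=2$, $Z(G)\cong C_p$, $Z_2(G)\cong C_{p^2}\times C_p$, $|Z_3(G)|=p^5$, $Z_3(G)$ abelian and not centralizing $\Phi(G)$, $\gamma_3(G)G^p=Z_{c-2}(G)$, all upper central factors elementary abelian, and $G/Z_{c-3}(G)$ a maximal class group of order $p^4$ and exponent $p$ by Lemma \ref{expogood}. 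As in Lemma \ref{five}, I would write $Z_2(G)=\langle v\rangle\times\langle u\rangle$ with $u\in\Omega_1(Z_2(G))-Z(G)$, $v^p$ generating $Z(G)=\langle z\rangle$, choose generators $x\notin C_G(u)$, $y\in C_G(u)-\Phi(G)$, and normalize $[x,u]=[x,v]=[y,v]=z$; then consider $\phi_u$ from Lemma \ref{autu}. If $\phi_u$ is non-inner we are done, so I assume it is inner, giving $t\in Z_3(G)-Z_2(G)$ with $[x,t]=u$, $[y,t]=1$, $t^p\in Z(G)$ and $t\in Z(\Phi(G))$; this produces the abelian group $\Omega_1(\langle Z_2(G),t\rangle)=\langle u\rangle\times\langle z\rangle\times\langle w\rangle$ of order $p^3$ inside $Z_3(G)\cap Z(\Phi(G))$, exactly as in the first four paragraphs of Lemma \ref{five}, which are stated there to be independent of the class being $5$.

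Next I would run the derivation construction over the maximal class quotient $Q:=G/Z_{c-3}(G)$, whose presentation (Theorem \ref{goodquotient}, since $Q$ has exponent $p$ by Lemma \ref{expogood}) is $\langle a,d\mid a^p,d^p,[d,a,d],[d,a,a,d],[d,a,a,a]\rangle$ with the images of $x,y$ playing the roles of $a,d$ (or an appropriate uniform/non-uniform pair). For every pair $(\alpha,\beta)$ with $\alpha\in\Omega_1(\langle Z_2(G),t\rangle)$ and $\beta\in\Omega_1(Z_2(G))$ I would define $\delta_{\alpha,\beta}\colon F\to\Omega_1(\langle Z_2(G),t\rangle)$ by $x\mapsto\alpha$, $y\mapsto\beta$, and verify that it kills all defining relators of $Q$: the relators $a^p,d^p$ vanish because $p\geq5$ forces the correction terms $[\delta(x),x,x]^{\binom p3}$ etc.\ into $\gamma_3$ of an abelian-by-small section and hence to $1$ (same computation as in Lemma \ref{five}); the weight-three and weight-four commutator relators vanish because $\delta$ takes values in $Z(\Phi(G))$, so that $\delta$ of any commutator $[\ ,\ ]$ collapses to a product of terms $[\delta(\cdot),\cdot]$ lying in $Z(G)$, and iterating one more commutator bracket with $x$ or $y$ annihilates these (here I would invoke $[Z_2(G),\Phi(G)]=1$ from Remark \ref{z2ab} and the fact that the values lie in $Z(\gamma_3(G)G^p)$). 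By Lemma \ref{Der} these descend to derivations of $Q$, by Remark \ref{naturalwaylift} they lift back to $G$ since $\Omega_1(\langle Z_2(G),t\rangle)\leq Z(Z_{c-3}(G))$, and by Lemma \ref{lift} (the values being fixed pointwise by $\delta$) each extends to an automorphism; Lemma \ref{powers}, or the short inductive argument used in Theorem \ref{marco1}, shows each has order $p$. This yields $p^{3}\cdot p^{2}=p^{5}$ automorphisms of order $p$ with images in $\Omega_1(\langle Z_2(G),t\rangle)\subseteq Z_3(G)$.

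Finally I would close with Remark \ref{tobeinner}, applied with $H=\Omega_1(\langle Z_2(G),t\rangle)$ (of order $p^3$, contained in the abelian group $Z_3(G)$), $i=5$, and $k+1$ chosen so that $Z_{k+1}(G)$ is the largest upper-central term with $|Z_{k+1}(G)/Z(G)|\leq p^5$ — concretely $Z_4(G)$, for which $|Z_4(G)/Z(G)|=p^5$ by the recorded structure when $c\geq6$. If the $p^5$ automorphisms are not all inner we are done; otherwise Remark \ref{tobeinner} forces $[Z_4(G),G]\subseteq H$ and, since every element of $H$ is hit by one of the $\delta_{\alpha,\beta}$, in fact $[Z_4(G),G]=H=\Omega_1(\langle Z_2(G),t\rangle)$, so every element of $Z_4(G)$ induces an order-$p$ automorphism by conjugation and $Z_4(G)/Z(G)$ is elementary abelian. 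Then, as in Lemma \ref{five}, I would run the lower central series: $\gamma_{c}(G)=Z(G)$, $|\gamma_4(G)|\le|[Z_3(G),G]|=|\Omega_1(Z_2(G))|=p^2$, $|\gamma_3(G)|\le|[Z_4(G),G]|=p^3$, forcing $\gamma_3(G)=\Omega_1(\langle Z_2(G),t\rangle)$ and $\gamma_2(G)/\gamma_3(G)\cong C_p$ (a $C_{p^2}$ would give $G/\gamma_2(G)\cong C_{p^2}\times C_p$, impossible), whence $\gamma_2(G)$ is abelian and $G$ is two-generator metabelian; Theorem \ref{car} then lets every assignment $x\mapsto a$, $y\mapsto b$ with $a,b\in\gamma_3(G)$ extend to an automorphism, and Lemma \ref{free} gives $\delta(G)=\gamma_3(G)$, $\delta^2(G)=\gamma_{c}(G)=Z(G)$, $\delta^3(G)=1$, so Lemma \ref{powers} with $p\geq5$ makes each such automorphism have order $p$, producing $|\gamma_3(G)|^2=p^6$ of them inside $Z_{k+1}(G)$ for a suitable larger $k$ — one more application of Remark \ref{tobeinner}, now with $i=6>t$, finishes the proof. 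The main obstacle I anticipate is the bookkeeping in the middle step: checking that the derivations really do kill every defining relator of the maximal class quotient $G/Z_{c-3}(G)$ (including the weight-four relators $[d,a,a,d]$ and $[d,a,a,a]$) for all classes $c\geq6$ simultaneously, and matching the parameter $i$ in Remark \ref{tobeinner} to the right upper-central term so that the count $p^{5}$ (resp.\ $p^{6}$) strictly beats the number of order-$p$ inner automorphisms; the exponent-$p$ conclusion of Lemma \ref{expogood} and the regularity threshold $c\le p-1$ are what keep these inequalities in the right direction when $p=5$.
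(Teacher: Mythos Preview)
Your setup and the $p^5$-derivation step are fine --- that is exactly the first half of Lemma~\ref{five}, which the paper explicitly says is valid for any $c$. The gap is in your endgame. The chain of inequalities
\[
|\gamma_4(G)|\le|[Z_3(G),G]|,\qquad |\gamma_3(G)|\le|[Z_4(G),G]|
\]
presupposes $\gamma_3(G)\le Z_3(G)$ and $\gamma_2(G)\le Z_4(G)$, and both fail as soon as $c\ge 6$: indeed $\gamma_i(G)\le Z_{c+1-i}(G)$, so for $c\ge 6$ one only has $\gamma_3(G)\le Z_{c-2}(G)$ with $c-2\ge 4$, and in fact $|\gamma_3(G)|\ge p^{c-2}\ge p^4>p^3$, directly contradicting the bound you want. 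For the same reason your claim $\delta^2(G)=\gamma_c(G)=Z(G)$ is specific to $c=5$; in general $\delta^2(G)\le\gamma_5(G)$, which is strictly larger than $Z(G)$ when $c\ge 6$, so the order-$p$ argument via Lemma~\ref{powers} and the metabelian step via Theorem~\ref{car} both break down. In short, the second half of Lemma~\ref{five} does not transport to higher class.

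What the paper does instead is avoid the metabelian route entirely by exploiting the extra relator in $Q=G/Z_{c-3}(G)$ --- precisely the point you noticed but then did not use. It splits on whether $\overline{M}:=C_G(u)/Z_{c-3}(G)$ coincides with the unique two-step centralizer $\overline{P}$ of $Q$. If $\overline{M}=\overline{P}$ then $\bar y$ is non-uniform, the presentation is $\langle x,y\mid x^p,y^p,[y,x,y],[y,x,x,y],[y,x,x,x]\rangle$, and because $\Omega_1(Z_2(G))\le C_G(y)$ one can send \emph{both} generators into $\Omega_1(\langle Z_2(G),t\rangle)$ and still kill the relator $[y,x,y]$; this yields $p^6$ automorphisms of order $p$, beating $|Z_4(G)/Z(G)|=p^5$ outright. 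If $\overline{M}\neq\overline{P}$ one cannot do this, so the paper instead runs a second $p^5$-family built from a different element $k\in Z_3(G)\setminus Z(\Phi(G))$ (obtained by assuming $x\mapsto 1$, $y\mapsto u$ is inner); if everything is inner one gets $[Z_4(G),G]=\Omega_1(\langle Z_2(G),t\rangle)=\Omega_1(\langle Z_2(G),k\rangle)$, impossible since $t\in Z(\Phi(G))$ but $k\notin Z(\Phi(G))$ forces these two order-$p^3$ groups to meet only in $\Omega_1(Z_2(G))$. The uniform/non-uniform dichotomy you parenthetically flagged is therefore not a bookkeeping nuisance but the actual engine of the proof.
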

\begin{proof}
Let $u \in  \Omega_{1}(Z_{2}(G)) - Z(G)$. Then  $M:=C_{G}(u)$ is maximal in $G$, and we can choose an element $y \in   M - \Phi(G)$ as one of the two generators of $G$. Set  $\overline{G}=G/Z_{c-3}(G)$. This quotient has only one two-step centralizer, say $\overline{P}$. Set $\overline{M}:=M/Z_{c-3}(G)$. Thus, either $\overline{M} = \overline{P}$ or $\overline{M} \neq \overline{P}$. We split the proof into two cases, namely: Case 1.  $\overline{M} = \overline{P}$; Case 2. $\overline{M} \neq \overline{P}$.

\vspace{4pt}

\noindent {\bf Case 1.} 
In this case  the image $\overline{y} \in G/Z_{c-3}(G)$ is not a uniform element. So we can choose the other generator $x$ (say) for $G$ such that $\overline{x}$ is uniform. Now  combining Lemma \ref{expogood} and Theorem \ref{goodquotient}, it follows  that $G/Z_{c-3}(G)$ has the following presentation,
$$G/Z_{c-3}(G) = \langle x,y \ | \ x^{p}, y^{p} , [y,x,y], [y,x,x,y], [y,x,x,x] \rangle. $$ 

\vspace{4pt}

Define, as in Lemma \ref{autu}, an automorphism of $G$ of order $p$ that sends $x$ to $xu$ and centralizes $M$. If this automorphism is of order $p$, then we are done. Otherwise, as in the first half of the proof of Lemma \ref{five}, we can write
\[\Omega_{1}(\langle Z_{2}(G),t \rangle )= \langle u \rangle \times \langle v^p \rangle \times \langle w \rangle,\]
\[ \Omega_{1}(\langle Z_{2}(G))=\langle u \rangle \times \langle v^p \rangle=\langle u \rangle \times \langle z \rangle.\]

\vspace{4pt}

Next we consider all possible  assignments from the two generator free group to $\Omega_{1}(\langle Z_{2}(G),t \rangle ) = \langle u \rangle \times \langle z \rangle \times \langle w \rangle$, which then extend in a unique way to a family of $p^{6}$ derivations, from the two generator free group to  $\Omega_{1}(\langle Z_{2}(G),t \rangle )$.  We now prove that  these derivations preserve the relations defining $G/Z_{c-3}(G)$.  In fact,  let  $\delta$ be an arbitrary  derivation obtained  above. Then, since $p \geq 5$, we have
$$\delta(x^{p})=\delta(x)^{p}[\delta(x),x]^{\binom{p}{2}}[\delta(x),x,x]^{\binom{p}{3}}=1$$
and
$$\delta(y^{p})=\delta(y)^{p}[\delta(y),y]^{\binom{p}{2}}[\delta(y),y,y]^{\binom{p}{3}}=1. $$

\vspace{4pt}

Since $\Omega_{1}(\langle Z_{2}(G),t \rangle ) \leq Z(\Phi(G))$,  we get $\delta([y,x])=[\delta(y),x][y,\delta(x)]$, which is an element of $Z(G)$ (and in particular, an element of  $\Omega_{1}(Z_{2}(G))$). Moreover, since $[y,x]y=y[y,x][y,x,y]$, we get 
$$\delta([y,x])^{y}\delta(y)=\delta(y)^{[y,x][y,x,y]}\delta([y,x])^{[y,x,y]}\delta([y,x,y]).$$
Now taking into account that $\Omega_{1}(Z_{2}(G)) \leq C_{G}(y)$ and that $\Omega_{1}(\langle Z_{2}(G),t \rangle ) \leq Z(\Phi(G))$, we get 
$\delta([y,x])\delta(y)=\delta(y)\delta([y,x])\delta([y,x,y])$, and consequently $\delta([y,x,y])=1$. Similarly, we can prove that the equalities $\delta([y,x,x,y])$ $=1$ and $\delta([y,x,x,x])=1$ hold.

\vspace{4pt}

Thus we obtain $p^{6}$ derivations from $G/Z_{c-3}(G)$ to $\Omega_{1}(\langle Z_{2}(G),t \rangle )$. Moreover, $\Omega_{1}(\langle Z_{2}(G),t \rangle ) \leq Z(Z_{c-3}(G))$ since $\Omega_{1}(\langle Z_{2}(G),t \rangle ) \leq Z(\Phi(G))$ and $\Phi(G)=Z_{c-1}(G)$. As above these derivations now extend to $p^{6}$ automorphisms of $G$ of order $p$. Hence, since $|Z_{4}(G)/Z(G)|=p^{5}$, the statement follows by Remark \ref{tobeinner}.

\vspace{4pt}

\noindent {\bf Case 2.} In this case we can choose an element $x$ as the second generator for $G$ such that $\overline{x}$ is not uniform in $\overline{G}$, and therefore we have 
$$G/Z_{c-3}(G)=\langle x,y \ | \ x^{p}, y^{p} , [x,y,x], [x,y,y,x], [x,y,y,y] \rangle .$$

 By the first half of the proof of Lemma \ref{five}, either $G$ admits a non-inner automorphism of order $p$ or   $[Z_{4}(G),G]=\Omega_{1}(\langle Z_{2}(G),t\rangle )$, where $t \in Z_3(G)$ and centralizes $\Phi(G)$. 
Now, using some different type of derivations, we show that either $G$ admits a non-inner automorphism of order $p$ or  $[Z_{4}(G),G]=\Omega_{1}(\langle Z_{2}(G), k\rangle )$, where $k \in Z_3(G)$ which does not centralize $\Phi(G)$.  Consider the assignment $ x \rightarrow 1$ and $y \rightarrow u$. As explained in the proof of Theorem \ref{marco1},  this  assignment extends to an automorphism of $G$ of order $p$,  which does not centralize $\Phi(G)$. If this is a non-inner automorphism of $G$, then we are done. Otherwise, there exists an element $k \in Z_{3}(G) - Z_{2}(G)$ such that $[x,k]=1$, $[y,k]=u$ and $k^{p} \in Z(G)$. Now consider the abelian subgroup $\Omega_{1}( \langle Z_{2}(G), k \rangle)$.  Then we can find and element $s \in  \langle Z_{2}(G), k \rangle$ of order $p$ such that   $[x,s] \in Z(G)$, $[y,s] \in \Omega_{1}(Z_{2}(G))$ and 
\[ \Omega_{1}( \langle Z_{2}(G), k \rangle) =  \langle z \rangle \times \langle u \rangle \times \langle s \rangle.\]

Next we define the family of assignments $ x \rightarrow a$, $y \rightarrow b$ with $a \in \Omega_{1}(Z_{2}(G))  $ and $b \in \Omega_{1}( \langle Z_{2}(G),k \rangle)$. We now prove that these assignments extend to derivations that preserve the relations of $G/Z_{c-3}(G)$.  Let $\delta$ be any derivation obtained by one of the assignments above. Then, as in Case 1, we can easily show that $\delta(x^{p})=1$ and $\delta(y^{p})=1$.

\vspace{4pt}

Now we consider the commutator relations. Since $xy=yx[x,y]$,  we have $$\delta(x)^{y}\delta(y)=\delta(y)^{x[x,y]}\delta(x)^{[x,y]}\delta([x,y]).$$  
Thus
\begin{eqnarray*}
\delta([x,y]) &=&(\delta(y)^{-1})^{x[x,y]}(\delta(x)^{-1})^{[x,y]}\delta(x)^{y}\delta(y)\\
 &=& (\delta(y)^{-1})^{x[x,y]}\delta(y)  (\delta(x)^{-1})^{[x,y]} \delta(x) \delta(x)^{-1}  \delta(x)^{y}\\
&=& [x[x,y],\delta(y)][x,y,\delta(x)][\delta(x),y]\\
&=& [x,\delta(y)]^{[x,y]}[x,y,\delta(y)][x,y,\delta(x)][\delta(x),y]\\
&=& [x,\delta(y)][x,y,\delta(y)][x,y,\delta(x)][\delta(x),y]\\
&=& [x,\delta(y)][\delta(x),y][x,y,\delta(y)][x,y,\delta(x)].
\end{eqnarray*}
Since $\delta(x) \in \Omega_{1}(Z_{2}(G))$, we obtain $\delta([x,y])=[x,y,\delta(y)][x,\delta(y)]$.  Moreover, since $[Z_{3}(G), \gamma_{2}(G)] \leq Z(G)$ and $[x,\Omega_{1}( \langle Z_{2}(G),k \rangle)  ] \leq Z(G)$, it follows that $\delta([x,y]) \in Z(G)$.

Applying $\delta$ to the equation $[x,y]x=x[x,y][x,y,x]$, we get $\delta([x,y])\delta(x)=\delta(x)\delta([x,y])\delta([x,y,x])$, which implies that  $\delta([x,y,x])=1$. Similarly   applying $\delta$ to $[x,y]y=y[x,y][x,y,y]$ gives $\delta([x,y])\delta(y)=\delta(y)^{[x,y]}\delta([x,y])\delta([x,y,y])$, which then implies that $\delta([x,y,y])=[x,y,\delta(y)]$ is an element of $Z(G)$. The next step is to consider the commutators of weight $4$. From $[x,y,y]x=x[x,y,y][x,y,y,x]$, we get $\delta([x,y,y])\delta(x)=\delta(x)\delta([x,y,y])\delta([x,y,y,x])$, and therefore $\delta([x,y,y,x])=1$. Similarly, from $[x,y,y]y=y[x,y,y][x,y,y,y]$, the equality $\delta([x,y,y])\delta(y)$ $=\delta(y)\delta([x,y,y])\delta([x,y,y,y])$ holds, and consequently $\delta([x,y,y,y])=1$.

\vspace{4pt}

Thus, we obtain $p^{5}$ derivations that take value in $\Omega_{1}(\langle Z_{2}(G),k\rangle )$, and  extend to automorphisms of $G$ of order $p$.  If one of these automorphisms is non-inner, then we are done. Otherwise, since $|Z_{4}(G)/Z(G)|=p^{5}$, it follows from Remark \ref{tobeinner} that $[Z_{4}(G),G]=\Omega_{1}(\langle Z_{2}(G),k\rangle )$. 

\vspace{4pt}

Summarizing what we have obtained so far, we notice that  either $G$ has a non-inner automorphism of order $p$ or  
\[\Omega_{1}(\langle Z_{2}(G), t \rangle)  = [Z_{4}(G),G] = \Omega_{1}(\langle Z_{2}(G), k \rangle).\]
 On the other hand, note that
$$Z_{2}(G) = \langle Z_{2}(G), t \rangle  \cap \langle Z_{2}(G) , k \rangle \geq \Omega_{1}(\langle Z_{2}(G), t \rangle)  \cap \Omega_{1}(\langle Z_{2}(G) , k \rangle).$$ 
In the latter case above, this implies that $\Omega_1(Z_{2}(G))$, which is of order $p^2$, contains the subgroup  $\Omega_{1}(\langle Z_{2}(G), t \rangle)$, which is of order $p^3$. This contradictions completes the proof of the theorem.
\end{proof}

As a result, in case there exists a counterexample to the conjecture for a finite $p$-group $G$ of coclass $3$, where $p$ is an odd prime, then $p$ must be equal to $3$.  Our techniques have certain limitations, and do not work for $p = 3$. In this situation, firstly, there are no quotients of maximal class, nilpotency class $3$ and exponent $3$, and secondly it is not possible to have relations defined by commutators of weight $3$ that vanish in a module that is not contained in the third center. It will be interesting to see that the conjecture holds true for finite $3$-groups of coclass $3$.

\vspace{4pt}

The following result  now completes the proof of Main Theorem in its full generality.
\begin{thm}\label{marco2}
Let $G$ be a finite $2$-group of coclass $3$ satisfying Hypothesis $B$ and having nilpotency class at least $4$. Then $G$ admits a non-inner automorphism of order $2$.
\proof

With the given conditions, we want to construct a special derivation, which leads to a non-inner automorphism of $G$ of order $2$. By Remark \ref{remark1}, we can assume that $G$ is a two generator finite $2$-group, $Z_{2}(G)/Z(G) \cong C_{2} \times C_{2}$ and $|Z_{3}(G)/Z_{2}(G)| \in \{2,4\}$. Furthermore, by Lemma \ref{specialheadgroup} we know that $G$ admits a quotient $G/N$ isomorphic to the dihedral group of order $8$. Moreover, by the proof of Lemma \ref{specialheadgroup}, we observe that $Z_{c-2}(G) \leq N < G^{2}$, and that $\Omega_{1}(Z_{2}(G)) \leq Z(N)$. Let us fix a non-central element $u \in \Omega_{1}(Z_{2}(G))$. Then  $C_{G}(u)$ is maximal in $G$, and we can choose two generators $x,y$  of $G$ such that $[u,y]=1$ and $[u,x] \ne 1$. Notice that, for the dihedral quotient $G/N$, either $y^{2}N = \overline{1}$ or $y^{2}N \neq \overline{1}$. If $y^{2}N = \overline{1}$, then it is possible to choose $x$ such that $xN$ has order $4$ in
  $G/N$. If $y^{2}N \neq \overline{1}$, then we choose $x$ such that $xN$ has order $2$ in $G/N$. Thus, it is possible to use $xN$ and $yN$ as generators of $G/N$ and to assume that one of them has order $2$ and the other one has order $4$.  Now let us define the map $\delta:G/N \rightarrow \Omega_{1}(Z_{2}(G))$ such that $\delta(\overline{x}^{i}\overline{y}^{j}\overline{w}^{k})= u^{j}z^{k}$, where $z=[x,u] \in \Omega_{1}(Z(G))$, $[\overline{x}, \overline{y}]={\overline{w}}$ and $1 \le i,j,k \le 2$. An easy calculation shows that the map $\delta$ is well-defined. Indeed, $\delta$ is a derivation. In fact, 

\begin{eqnarray*}
\delta(\overline{x}^{i}\overline{y}^{j}\overline{w}^{k}\overline{x}^{i'}\overline{y}^{j'}\overline{w}^{k'})
&=&\delta(\overline{x}^{i}\overline{y}^{j}\overline{x}^{i'}\overline{y}^{j'} \overline{w}^{k+k'}) =\delta(\overline{x}^{i+i'}\overline{y}^{j} [{\overline{y}}^{j},{\overline{x}}^{i'}]\overline{y}^{j'}\overline{w}^{k+k'})\\
&=&\delta(\overline{x}^{i+i'}\overline{y}^{j+j'}\overline{w}^{k+k'-ji'})=
u^{j+j'}z^{k+k'-ji'}\\
&=&u^{j+j'}z^{-ji'}z^{k+k'} = u^{j+j'}[u^{j},x^{i'}]z^{k+k'}\\
&=&u^{j}[u^{j},x^{i'}]z^{k}  u^{j'}  z^{k'} = (u^{j})^{x^{i'}}z^{k}  u^{j'}  z^{k'}\\
&=& (u^{j}z^{k})^{x^{i'}}  u^{j'}  z^{k'} = (u^{j}z^{k})^{x^{i'}y^{j'}w^{k'}}  u^{j'}  z^{k'}\\
&=&\delta(\overline{x}^{i}\overline{y}^{j}\overline{w}^{k})^{\overline{x}^{i'}\overline{y}^{j'}
\overline{w}^{k'}}\delta(\overline{x}^{i'}\overline{y}^{j'}\overline{w}^{k'}).
\end{eqnarray*}
Using Remark \ref{naturalwaylift} this derivation can be lifted to a derivation $\delta':G \rightarrow \Omega_{1}(Z_{2}(G))$ by the law $\delta'(g)=\delta(gN)$, for all $g \in G$.  Since $\Omega_{1}(Z_{2}(G)) \leq Z(N)$  (so $\delta'(\Omega_{1}(Z_{2}(G)))=1$),  $\delta'$ extends to an automorphism $\phi$ of $G$ of order $2$ defined by  $\phi(g)=g\delta'(g)$, for all $g \in G$. This automorphim $\phi$ does not centralize $\Phi(G)$. To finish with the proof, it only remains  to prove that $\phi$ is non-inner.  Contrarily assume that $\phi$ is inner. Then there exists an element  $k \in Z_{3}(G) -  Z_{2}(G)$ such that $\phi(g)=g^{k}$, for all $g \in G$. As a consequence, the following relations hold:  $[y,k]=u$, and $[x,k]=1$. Moreover, either $[x^{2},k] \ne 1 $ or $[y^{2},k] \ne 1$. In the former case,  we have  $$1 \ne [k,x^{2}]=[k,x]^{2}[[k,x],x]=1,$$
 which is absurd. Thus $[y^{2},k] \ne 1$. However,  in this case, $$1\ne [k,y^{2}]=[k,y]^{2}[[k,y],
 y]=(u^{-1})^{2}[u^{-1},y]=1,$$
  which cannot happen again.  The proof of the theorem is now complete.

\endproof
\end{thm}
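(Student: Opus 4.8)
The plan is to reduce, via Hypothesis $B$ and the structure results of Section 2, to a finite $2$-group $G$ with $d(G)=2$, $Z_2(G)/Z(G)\cong C_2\times C_2$, and with a dihedral quotient $G/N\cong D_8$, and then to exhibit a single explicit derivation $\delta\colon G/N\to\Omega_1(Z_2(G))$ whose induced automorphism of $G$ is of order $2$, does not centralize $\Phi(G)$, and cannot be realized as conjugation by an element of $Z_3(G)$. The key is that for a $2$-group of maximal class the quaternion and semidihedral types cannot occur as quotients of a capable group (Lemma \ref{bi} and Remark \ref{Dih}), which is exactly why Lemma \ref{specialheadgroup} produces a $D_8$-quotient $G/N$ with $\Omega_1(Z_2(G))\le Z(N)$; this last containment is what lets us apply Lemma \ref{lift}.

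First I would set up the arithmetic of the generators: fix a non-central $u\in\Omega_1(Z_2(G))$, note that $\alpha(g)=[g,u]$ is a homomorphism $G\to\Omega_1(Z(G))$ so that $C_G(u)$ is maximal, and choose generators $x,y$ of $G$ with $[u,y]=1$, $[u,x]=z\ne 1$. Using that $G/N\cong D_8$ and that one of $x,y$ can be taken to have order dividing $2$ modulo $N$ while the other has order $4$ modulo $N$, define $\delta$ on $G/N$ by $\delta(\bar x^{\,i}\bar y^{\,j}\bar w^{\,k})=u^{j}z^{k}$ where $\bar w=[\bar x,\bar y]$. The first real task is to verify $\delta$ is a well-defined derivation on $G/N$: this is the block computation in the statement, and it works because $u$ and $z$ lie in $Z_2(G)$ and $z$ is central, so the cocycle identity collapses to the displayed chain of equalities. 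Then Remark \ref{naturalwaylift} lifts $\delta$ to $\delta'\colon G\to\Omega_1(Z_2(G))$ with $\delta'(g)=\delta(gN)$, and since $\Omega_1(Z_2(G))\le Z(N)$ we have $\delta'$ trivial on $\Omega_1(Z_2(G))$, so Lemma \ref{lift} gives an automorphism $\phi(g)=g\delta'(g)$. That $\phi^2=\mathrm{id}$ follows from Lemma \ref{powers} together with $\delta'^{\,2}=1$ (the image of $\delta'$ is elementary abelian and centralized appropriately), exactly as in the order-$p$ arguments earlier in the paper.

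The main obstacle — and the heart of the proof — is ruling out that $\phi$ is inner. Since $\phi$ fixes $C_G(u)$ pointwise and $\phi(x)=xu$ with $u\in\Phi(G)$, if $\phi=\mathrm{conj}_k$ then $k$ centralizes a maximal subgroup, hence $k\in Z_3(G)\setminus Z_2(G)$ with $k^2\in Z(G)$, and $[y,k]=u$, $[x,k]=1$. Because $\phi$ moves $\Phi(G)$, at least one of $[x^2,k]$, $[y^2,k]$ is non-trivial. Here one uses the class-$\le 3$ commutator expansion $[k,g^2]=[k,g]^2[[k,g],g]$: for $g=x$ this forces $1\ne[k,x^2]=[k,x]^2[[k,x],x]=1$, a contradiction; for $g=y$ it forces $1\ne[k,y^2]=[k,y]^2[[k,y],y]=(u^{-1})^2[u^{-1},y]=1$, since $u$ has order $2$ and commutes with $y$ — again a contradiction. (The validity of the quadratic expansion needs $G/N$ of class $\le 3$, which is why $N\ge Z_{c-2}(G)$ and $N<G^2$ matter.) So $\phi$ is a non-inner automorphism of order $2$, completing the proof; combined with Theorem \ref{marco1}, Lemma \ref{five} and Theorem \ref{main}, the Main Theorem follows in full.
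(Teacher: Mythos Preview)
Your approach is exactly the paper's: the same dihedral quotient $G/N$, the same explicit derivation $\delta(\bar x^{\,i}\bar y^{\,j}\bar w^{\,k})=u^{j}z^{k}$, the same lift via Remark~\ref{naturalwaylift} and Lemma~\ref{lift}, and the same commutator-expansion contradiction at the end. However, there is a genuine muddle in your description of what $\phi$ does on generators. From your own definition of $\delta$ one has $\delta(\bar x)=1$ and $\delta(\bar y)=u$, so $\phi(x)=x$ and $\phi(y)=yu$; in particular $\phi$ does \emph{not} fix $C_G(u)$ pointwise (since $y\in C_G(u)$), and $\phi(x)\ne xu$. You have conflated this $\phi$ with the automorphism $\phi_u$ of Lemma~\ref{autu}. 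As a consequence, your justification that ``$k$ centralizes a maximal subgroup'' is unsupported; the correct reason that $k\in Z_3(G)\setminus Z_2(G)$ is simply that $[g,k]=\delta'(g)\in\Omega_1(Z_2(G))$ for every $g\in G$, while $[y,k]=u\notin Z(G)$ forces $k\notin Z_2(G)$.

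Your stated relations $[y,k]=u$ and $[x,k]=1$ are in fact the right ones---they follow from the actual values of $\delta$, not from your misstated premises---and the final contradiction via $[k,g^2]=[k,g]^2[[k,g],g]$ is precisely the paper's. One further correction: the validity of that expansion comes from $[k,g]\in Z_2(G)$ (because $k\in Z_3(G)$), not from any class bound on $G/N$; the inclusions $Z_{c-2}(G)\le N<G^{2}$ are used instead to ensure that $\Omega_1(Z_2(G))\le Z(N)$ and that $\phi$ genuinely moves an element of $\Phi(G)$ (namely whichever of $x^2,y^2$ projects to $\bar w$), which is what sets up the dichotomy $[x^2,k]\ne 1$ or $[y^2,k]\ne 1$.
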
  

\section{Acknowledgements}
The first author would like to thank the Department of Mathematics at  
the University of the Basque Country for its excellent hospitality  
while part of this paper was being written; on the other hand, the  
first two authors also wish to thank Professors Norberto Gavioli, Gustavo A. Fern\'andez Alcober and  
Carlo Maria Scoppola for their suggestions.

\bibliography{articles}

\bigskip
\bigskip

\end{document}